\newtheorem{thm}{Theorem}
\newtheorem{prop}[thm]{Proposition}
\newtheorem{corol}[thm]{Corollary}
\newtheorem{lem}[thm]{Lemma}
\newtheorem*{remark}{Remark}
\newcommand{\Z}{\mathbb{Z}}
\newcommand{\R}{\mathbb{R}}
\newcommand{\Q}{\mathbb{Q}}
\newcommand{\gtop}{g_4^{\rm top}}
\newcommand{\galg}{g_{\rm alg}}
\newcommand{\Int}{{\rm int}\,}
\newcommand{\BlockMatrix}[1]{
\begin{matrix}
#1 & \dotsb & #1\\
\vdots & & \vdots\\
#1 & \dotsb & #1
\end{matrix}
}
\newcommand{\RowMatrix}[1]{
\begin{matrix}
#1 & \dotsb & #1
\end{matrix}
}
\newcommand{\ColumnMatrix}[1]{
\begin{matrix}
#1\\
\vdots\\
#1
\end{matrix}
}
\newcommand{\Legendre}[2]{\left(\frac{#1}{#2}\right)}
\begin{document}

\title{Null-homologous twisting and the algebraic genus}

\author{Duncan McCoy}
\address{Department of Mathematics \\
         The University of Texas At Austin}

\begin{abstract}
The algebraic genus of a knot is an invariant that arises when one considers upper bounds for the topological slice genus coming from Freedman's theorem that Alexander polynomial one knots are topologically slice. This paper develops null-homologous twisting operations as a tool for studying the algebraic genus and, consequently, for bounding the topological slice genus above. As applications we give new upper bounds on the algebraic genera of torus knots and satellite knots.
\end{abstract}

\maketitle
\section{Introduction}
In this paper we study the algebraic genus $\galg(L)$ of an oriented link $L$ in $S^3$, as defined by Feller-Lewark \cite{Feller18Classical}. It is a famous theorem of Freedman that a knot $K$ in $S^3$ with Alexander polynomial $\Delta_K =1$ is topologically slice \cite{Freedman82topology}. It was first observed by Rudolph that this can be used to construct upper bounds on the topological slice genus of knots even when the Alexander polynomial is non-trivial \cite{Rudolph84Projective}. If a knot $K$ has a Seifert surface $F$ containing a subsurface $F'$ such that $\partial F'$ is a knot with Alexander polynomial one, then $F'$ can be replaced by a locally flat disk in the 4-ball to show that $K$ cobounds a locally flat surface of genus $g(F)-g(F')$. The algebraic genus can be defined as the optimal upper bound for $\gtop(L)$ that can be achieved by this method:
\[\galg(L)=\min \left\lbrace g(F)-g(F') \,\middle| \, \parbox{0.6\textwidth}{$F$ is a Seifert surface for $L$ and $F'\subset F$ is a subsurface such that $\partial \Sigma'=K'$ is a knot with $\Delta_{K'}(t)=1$.} \right\rbrace.\]
The main utility of the algebraic genus is that it has several equivalent formulations, including one that depends only on the $S$-equivalence class of the Seifert form of $L$ \cite{Feller18Classical}. This flexibility means that the algebraic genus has been used to prove a variety of results about the topological slice genus \cite{Baader17stable_alternating, Feller16degree, Feller16twobridge, Liechti16positive}. It turns out that, at least for knots, the algebraic genus has a pleasing topological interpretation as the minimal possible genus of a compact, locally flatly embedded surface $F\subseteq B^4$ such that $\partial F =K$ and $\pi_1(B^4\setminus F)\cong \Z$ \cite{Feller19balanced}.

The purpose of this paper is to explore how the algebraic genus changes under certain twisting operations. Using these operations, we obtain new upper bounds for the algebraic genus of satellite knots and torus knots.

\subsection*{Null-homologous twisting}
Given an oriented knot or link $L$ in $S^3$ and an integer $n$, we perform a {\em null-homologous $n$-twist} by taking an unknotted curve $C$ disjoint from $L$ with $lk(C,L)=0$ and performing $1/n$-surgery on $C$. Such a twist can always be performed locally by adding $n$ full twists on some number of parallel strands with appropriate orientations. See Figure~\ref{fig:sample_operation}, for example. 

It turns out that certain pairs of null-homologous twisting operations change the algebraic genus by at most one.
\begin{restatable}{thm}{algtwisting}\label{thm:alg_twisting}
If $L$ and $L'$ are oriented links related by a null-homologous $m$-twist and a null-homologous $n$-twist for $m,n \in \Z$ such that $-mn$ is a square, then
\[
|\galg(L)-\galg(L')|\leq 1.
\]
\end{restatable}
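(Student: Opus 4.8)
The plan is to pass to Seifert forms. As $\galg$ depends only on the $S$-equivalence class of the Seifert form, it is enough to start from a Seifert surface for $L$ and produce one for $L'$ whose Seifert form is obtained by a modification that changes the algebraic genus by at most one.

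First I would record the effect of a single null-homologous twist. Let $C$ be the twisting unknot, so $lk(C,L)=0$. Since the algebraic intersection number of $C$ with any connected Seifert surface of $L$ equals $lk(C,L)=0$, after tubing away the intersection points in canceling pairs I may choose a (possibly high-genus) Seifert surface $F$ for $L$ with $F\cap C=\varnothing$; the $1/n$-surgery on $C$ then retwists the longitudinal part of $F$ lying in $\nu(C)$ and carries $F$ to a homeomorphic Seifert surface $F'$ for $L'$. By the surgery (Rolfsen) formula for linking numbers, in any basis $e_1,\dots,e_{2g}$ of $H_1(F)=H_1(F')$ the Seifert matrices satisfy $A' = A - n\,\mathbf u\mathbf u^{T}$ with $\mathbf u = (lk(e_i,C))_i \in \Z^{2g}$: a single twist perturbs the Seifert matrix by a symmetric, rank-$\le 1$ integer matrix. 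Note that on its own this gives no control on $\galg$, since the genus of $F$ can be arbitrarily large.

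The idea is to play the two twists off one another. Choosing $F$ disjoint from both twisting curves and applying the previous step twice, one obtains a Seifert matrix for $L'$ of the form
\[
A' = A + M\begin{pmatrix}-m & 0\\ 0 & -n\end{pmatrix}M^{T},
\]
where $M$ is an integer $2g\times 2$ matrix whose columns record the two linking-number vectors (the second corrected for the intervening surgery). The binary form $\operatorname{diag}(-m,-n)$ has determinant $mn$, so the hypothesis that $-mn$ is a square says precisely that this form is isotropic over $\Z$ (hyperbolic over $\Q$), hence admits a primitive integral isotropic vector; using it to change basis brings the perturbation to the shape $M'\left(\begin{smallmatrix}0 & c\\ c & e\end{smallmatrix}\right)M'^{T}$, carrying a hyperbolic pair. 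I would then argue that a perturbation of this hyperbolic shape can, after a single stabilization, be absorbed by routing one extra trivial handle — whose Seifert block $\left(\begin{smallmatrix}0&1\\0&0\end{smallmatrix}\right)$ provides exactly the hyperbolic pair needed — realizing $L$ and $L'$ by Seifert surfaces that differ by a single genus-one modification, and conclude $|\galg(L)-\galg(L')|\le 1$.

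The decisive step, which I expect to be the real difficulty, is this last one. An arbitrary rank-$\le 2$ perturbation would cost two handles and only yield the bound $2$; one must exploit the isotropy — equivalently, that $-mn$ is a (necessarily nonzero, hence forcing $m$ and $n$ to have opposite sign) square — to bring it down to one. Making this precise should require an explicit computation in the $S$-equivalence calculus: stabilize once, use the integral isotropic vector to normalise the rank-two perturbation against the new handle, destabilise; with extra parity bookkeeping when $m$ and $n$ are not themselves perfect squares, and some care for genuine links, where $A-A^{T}$ need not be unimodular. The remaining deductions are routine.
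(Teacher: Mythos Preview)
Your plan is essentially the paper's. The paper also chooses a Seifert surface disjoint from both surgery curves, observes that the Seifert matrix changes by a rank-$\le 2$ symmetric perturbation governed by $\operatorname{diag}(-m,-n)$, stabilises once, and then absorbs the perturbation by an explicit change of basis. The ``decisive step'' you flag is dispatched cleanly: one writes $m=-ax^{2}$, $n=ay^{2}$ (this parametrisation is exactly the existence of your integral isotropic vector for $\operatorname{diag}(-m,-n)$) and checks a single $4\times 4$ matrix identity, then promotes each entry to a block. No destabilisation is needed---it is enough that the once-stabilised Seifert matrix of $L'$ contain, after congruence, a Seifert matrix of $L$ realising $\galg(L)$ as a principal submatrix; the Alexander-trivial subgroup then persists and gives $\galg(L')\le\galg(L)+1$.

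Two small corrections to your write-up. First, the parenthetical ``necessarily nonzero'' is wrong: $-mn=0$ is a perfect square, and this case (one of $m,n$ zero, i.e.\ a single null-homologous twist) is covered by the theorem and by your own argument, since $\operatorname{diag}(-m,0)$ is visibly isotropic. Second, the anticipated complications do not materialise: there is no parity bookkeeping (the identity works for all $a,x,y\in\Z$ uniformly), and unimodularity of $A-A^{T}$ is never used, so links require no extra care.
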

Most notably this shows that for any integer $n$, a single null-homologous $n$-twist changes the algebraic genus by at most one. It is also shows that a null-homologous $+1$-twist and a null-homologous $-1$-twist change the algebraic genus by at most one. This latter observation can be seen as an analog of the well-known fact that changing a negative crossing and a positive crossing changes the smooth slice genus by at most one.

\begin{figure}
  \begin{overpic}[width=0.3\textwidth]{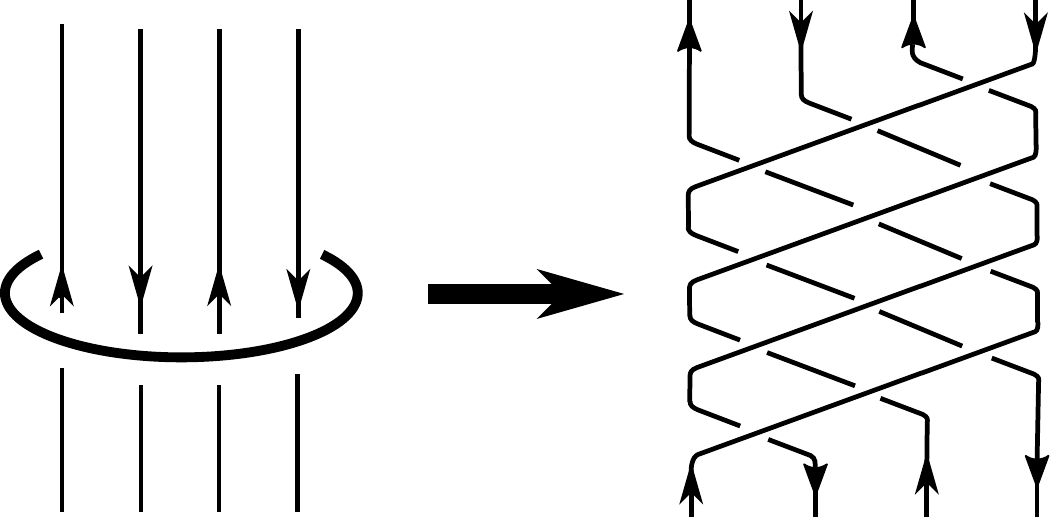}
    \put (-12,17) {$-1$}
  \end{overpic}
  \caption{A negative null-homologous $-1$-twist on $4$ strands.}
  \label{fig:sample_operation}
  \end{figure}

For any link one can always find pairs of null-homologous $+1$- and $-1$-twists which decrease the algebraic genus. This leads to the following description of the algebraic genus.
\begin{restatable}{thm}{twistcharacterization}\label{thm:twist_characterization}  For any link $L$, we have
\[\galg(L)=\min \left\lbrace \max\{n,p\} \,\middle| \, \parbox{0.6\textwidth}{$L$ can be converted to a link $L'$ with $\galg(L')=0$ by $p$ null-homologous $+1$-twists and $n$ null-homologous $-1$-twists.} \right\rbrace.\]
\end{restatable}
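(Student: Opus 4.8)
The plan is to prove two inequalities. Write $t(L)$ for the quantity $\min\{\max\{n,p\}\}$ on the right-hand side. The inequality $\galg(L) \le t(L)$ should follow almost immediately from Theorem~\ref{thm:alg_twisting}: if $L$ can be turned into $L'$ with $\galg(L') = 0$ using $p$ null-homologous $+1$-twists and $n$ null-homologous $-1$-twists, then pair up the $+1$- and $-1$-twists. Each pair $(+1,-1)$ has $-mn = -(1)(-1) = 1$, a square, so by Theorem~\ref{thm:alg_twisting} each such pair changes $\galg$ by at most $1$. After exhausting $\min\{n,p\}$ pairs we are left with $|n-p| = \max\{n,p\} - \min\{n,p\}$ twists all of the same sign, each of which individually changes $\galg$ by at most $1$ (again by Theorem~\ref{thm:alg_twisting}, taking $m = n$ so that $-mn = -n^2$ is... wait, this needs $-mn$ square, so one should instead invoke the ``single $n$-twist'' consequence noted right after Theorem~\ref{thm:alg_twisting}, which presumably follows by a degenerate/limiting case or a separate short argument). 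Summing, $\galg(L) \le 0 + \min\{n,p\} + (\max\{n,p\}-\min\{n,p\}) = \max\{n,p\}$, and minimizing over all such sequences gives $\galg(L) \le t(L)$.

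**The reverse inequality.** For $t(L) \le \galg(L)$ I need to exhibit, for a link $L$ with $\galg(L) = g$, a sequence of $p$ positive and $n$ negative null-homologous twists reducing it to a link of algebraic genus $0$ with $\max\{n,p\} \le g$; in fact I expect one can achieve $n = p = g$. The natural approach is to start from a Seifert surface $F$ for $L$ realizing the algebraic genus: there is a subsurface $F' \subseteq F$ with $\partial F' =: K'$ satisfying $\Delta_{K'} = 1$ and $g(F) - g(F') = g$. The complement $F \setminus F'$ has genus $g$, so it contains $g$ ``handles'' each carrying a standard symplectic pair of curves $a_i, b_i$ in the Seifert form. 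The idea is to kill these handles one at a time by null-homologous twists. For a single genus-one handle with Seifert matrix block $\left(\begin{smallmatrix} a & b \\ b' & c\end{smallmatrix}\right)$ (with $b - b' = \pm 1$), one wants to find an unknotted curve $C$ with $lk(C,L) = 0$, bounding a disk meeting $F$ in an arc that runs over the handle, such that $\pm 1$-surgery on $C$ surgers the handle away — decreasing $g(F)$ by one while keeping $\partial F' $ fixed, hence decreasing $\galg$ by one. The sign of the twist needed will depend on the handle, and the claim "for any link one can always find pairs of $+1$- and $-1$-twists which decrease the algebraic genus" (stated in the excerpt just before the theorem) is exactly the input that lets us do this in balanced $(+1,-1)$ pairs, so that after $g$ pairs we reach $\galg = 0$ with $n = p = g \le g$. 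Hence $t(L) \le g = \galg(L)$.

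**Main obstacle.** The crux is the reverse inequality, and specifically verifying that one can always choose the destabilizing twists in \emph{balanced} pairs of opposite signs while controlling linking numbers. Reducing $\galg$ by a single twist is plausibly not hard once one has the right curve $C$: twisting along a suitable curve lying on $F$ and bounding a disk off the handle is a standard way to reduce Seifert genus without changing the rest of the form. The subtlety is the sign bookkeeping — one needs to know that the handles come in a form where a $+1$-twist handles one and a $-1$-twist handles another, or that each handle can be dealt with by a $(+1,-1)$ pair on its own; this is presumably where the hypothesis that such balancing is always possible (to be established separately, perhaps via a normal form for the Seifert form of a ``$\galg = 0$'' link or via an explicit model twist region) does the real work. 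I would therefore structure the proof by first isolating and proving a lemma of the form ``if $\galg(L) \ge 1$ then there exist a null-homologous $+1$-twist and a null-homologous $-1$-twist, performable simultaneously, after which the algebraic genus drops by at least $1$,'' and then iterate it $g$ times; the two inequalities above then close the argument.
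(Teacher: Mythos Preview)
Your proposal is correct and follows essentially the same two-inequality structure as the paper. For $\galg(L)\le t(L)$ the paper argues exactly as you do via Theorem~\ref{thm:alg_twisting}; your momentary worry about the leftover single-sign twists is resolved by pairing each with a $0$-twist (so $-mn=0$ is a square), which is the ``single $n$-twist'' consequence the paper notes immediately after Theorem~\ref{thm:alg_twisting}. For the reverse inequality, the lemma you isolate---a $(+1,-1)$ pair that drops $\galg$ by one---is precisely Proposition~\ref{prop:alg_twist_decrease}; the paper's proof of that proposition is a bit different from your sketch: rather than directly twisting along a curve on the surface, it introduces a $0$/$+1$-framed Hopf link as a trivial surgery presentation, slides a handle of $F\setminus F'$ over the $+1$-component, uses the $0$-framed meridian to unknot and untwist that handle, and then slides the Hopf link apart into a $(+1,-1)$ unlink realizing the desired twist pair.
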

For knots, a stronger formulation of Theorem~\ref{thm:twist_characterization} holds. Using the work Borodzik and Friedl on the Blanchfield form \cite{Borodzik14Algebraic, Borodzik15UnknottingI}, one can show that the null-homologous twists in Theorem~\ref{thm:twist_characterization} can be be realized by crossing changes  \cite{Feller19balanced}.

The condition that $-mn$ be a square turns out to be essential to the proof Theorem~\ref{thm:alg_twisting}.
\begin{restatable}{prop}{othertwisting}\label{prop:other_twisting}
For any $m,n\in \Z$ such that $-mn$ is not a square, there is a knot $K$ with $\galg(K)=\gtop(K)=2$, which can be unknotted by performing a null-homologous $m$-twist and a null-homologous $n$-twist. 
\end{restatable}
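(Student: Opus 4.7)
The plan is to construct $K$ as a connected sum $K = J_m \# J_n$ of two genus-one knots, each unknotted by one of the prescribed twists. For every integer $k \neq 0$, one can find a knot $J_k$ such that (i) $\galg(J_k) = \gtop(J_k) = 1$, (ii) $|\sigma(J_k)| = 2$, and (iii) $J_k$ is unknotted by a single null-homologous $k$-twist. A first candidate is an appropriate twist knot (the knot obtained from the unknot by $|k|$ full twists on a null-homologous pair of strands, or its mirror); when twist knots do not already yield $|\sigma| = 2$ for a given $k$, one can enlarge the strand collection or modify the local model to arrange the signature. Claims (i)--(iii) are then verified using the genus-one Seifert surface (for $\galg \leq 1$), the non-triviality of $\Delta_{J_k}$ (for $\galg \geq 1$), and the signature bound $\gtop \geq |\sigma|/2$.

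Set $K = J_m \# J_n$. This knot is unknotted by performing the null-homologous $m$-twist on the $J_m$ summand (converting it to the unknot) followed by the null-homologous $n$-twist on the $J_n$ summand, and its connected-sum Seifert surface has genus two, giving $\galg(K) \leq 2$. In the same-sign case $mn > 0$, where the hypothesis $-mn$ not a square is automatic, one can choose the $J_k$'s so that $\sigma(J_m)$ and $\sigma(J_n)$ have matching signs; additivity of the signature under connected sum then gives $|\sigma(K)| = 4$ and hence $\gtop(K) \geq 2$, which combined with $\galg(K) \leq 2$ forces $\galg(K) = \gtop(K) = 2$.

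The main obstacle is the mixed-sign case, $mn < 0$, where the condition ``$-mn$ not a square'' becomes a genuine hypothesis. Here the classical signatures of $J_m$ and $J_n$ cancel, and one must appeal to Levine--Tristram signatures $\sigma_\omega$ at some $\omega \in S^1 \setminus \{\pm 1\}$ via the bound $\gtop(K) \geq |\sigma_\omega(K)|/2$. The step functions $\omega \mapsto \sigma_\omega(J_k)$ jump at the roots of $\Delta_{J_k}$, which lie on the unit circle at positions quadratic in $k$; the hypothesis that $-mn$ is not a square should prevent these roots from interlacing in a way that forces $\sigma_\omega(K)$ to vanish identically, and making this precise is the core of the argument. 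An alternative, which may be cleaner, is to bypass Levine--Tristram signatures and use the Blanchfield form machinery of Borodzik--Friedl: the condition ``$-mn$ not a square'' appears there as a number-theoretic obstruction to finding a rank-one metabolizer inside the rank-two Blanchfield form of $K$, which is precisely what $\gtop(K) \leq 1$ would require.
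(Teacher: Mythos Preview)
Your overall setup is close to the paper's: the paper builds a genus-two knot $K(a,b,c,d)$ whose Seifert matrix, after reordering the basis, is block-diagonal with blocks $\begin{pmatrix} a & 1 \\ 0 & c \end{pmatrix}$ and $\begin{pmatrix} b & 1 \\ 0 & d \end{pmatrix}$, so at the level of Seifert forms this is exactly your connected sum $J_m \# J_n$ with two free parameters $c,d$. The same-sign case is handled identically via $|\sigma|=4$.

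The genuine gap is the mixed-sign case. Your primary proposal, Levine--Tristram signatures, cannot yield $\gtop(K)\geq 2$ here. For a genus-one knot $J$ with $\sigma(J)=\pm 2$, the function $\omega\mapsto\sigma_\omega(J)$ is a single step from $0$ to $\pm 2$; hence if $\sigma(J_m)=+2$ and $\sigma(J_n)=-2$, then for every $\omega$ one has $\sigma_\omega(J_m)\in\{0,2\}$ and $\sigma_\omega(J_n)\in\{0,-2\}$, so $|\sigma_\omega(J_m\#J_n)|\leq 2$ and you only get $\gtop(K)\geq 1$. No choice of jump locations rescues this. Your fallback to Blanchfield forms is not developed enough to evaluate, and in any case the obstruction it would produce is essentially the one the paper actually uses.

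What the paper does instead is invoke Taylor's lower bound: if $\gtop(K)<g(F)$ for a Seifert surface $F$, then the quadratic form $v\mapsto\theta(v,v)$ on $H_1(F;\Z)$ is isotropic. So it suffices to choose the free parameters $c,d$ so that the rank-four form is \emph{anisotropic}. After diagonalising over $\Q$, the form becomes
\[
a x_1^2 - |b| x_2^2 + a(4ac-1)x_3^2 + |b|(4|b|d+1)x_4^2,
\]
and the hypothesis that $-mn=a|b|$ is not a square is used precisely here: by quadratic reciprocity and Dirichlet there is an odd prime $p$ with $\bigl(\tfrac{a|b|}{p}\bigr)=-1$, and then one chooses $c,d$ so that $p$ divides $4ac-1$ and $4|b|d+1$ exactly once, forcing anisotropy (over $\Q_p$, hence over $\Q$ and $\Z$). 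This local-arithmetic step is the missing idea in your sketch.
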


\subsection*{Satellite knots}
For satellite knots we prove the following upper bound on the algebraic genus. This bound was first obtained (using different ideas) by Feller, Miller and Pinzon-Caicedo \cite{Feller19satellite}.
\begin{restatable}{thm}{satellite}\label{thm:satellites}
For a satellite knot $P(K)$, we have
\[
\galg(P(K))\leq \galg(P(U))+ \galg(K).
\]
\end{restatable}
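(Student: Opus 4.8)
The plan is to transfer the twisting description of $\galg(K)$ furnished by Theorem~\ref{thm:twist_characterization} through the satellite construction, and then to absorb the error term that remains using the behaviour of the Alexander module under satellite operations. The key preliminary observation is the following consequence of Theorem~\ref{thm:alg_twisting}: if links $L$ and $L'$ are related by $p$ null-homologous $+1$-twists and $n$ null-homologous $-1$-twists, then $|\galg(L)-\galg(L')|\leq\max\{p,n\}$. To see this, realise the whole collection of twisting curves at once as a link in $S^3$ disjoint from $L$, in the standard way that a sequence of twists unrolls into surgery on a link; since each twisting curve is itself null-homologous, performing any subcollection of the corresponding $\pm1$-surgeries preserves the zero-linking conditions on the remaining curves, so the twists may be reordered freely. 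Group them into $\min\{p,n\}$ complementary pairs, each consisting of a null-homologous $+1$-twist together with a null-homologous $-1$-twist, and $|p-n|$ left-over twists of a single sign. Theorem~\ref{thm:alg_twisting} applies to each pair, since $-(+1)(-1)=1$ is a square, and to each singleton, taking the second twist trivial and using that $0$ is a square, bounding the change of $\galg$ by $1$ in each case; telescoping gives $\min\{p,n\}+|p-n|=\max\{p,n\}$.

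Next I would push Theorem~\ref{thm:twist_characterization} through the satellite operation. Write $w$ for the winding number of $P$. By Theorem~\ref{thm:twist_characterization} there is a sequence of $p$ null-homologous $+1$-twists and $n$ null-homologous $-1$-twists, with $\max\{p,n\}=\galg(K)$, converting $K$ into a knot $K_0$ with $\galg(K_0)=0$. Each twist in this sequence is performed along an unknot $C$ that is disjoint from the current companion $J$ and satisfies $lk(C,J)=0$. Since $C$ is disjoint from $J$, the tubular neighbourhood of $J$ carrying the pattern can be isotoped off $C$; the surgery along $C$ then misses the pattern region and simply re-embeds it as a tubular neighbourhood of the twisted companion $J'$, so it converts $P(J)$ into $P(J')$ (here the hypothesis $lk(C,J)=0$ is exactly what is needed to match up the $0$-framings used by the satellite construction). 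Moreover $lk(C,P(J))=w\cdot lk(C,J)=0$, so this is again a null-homologous twist of the same sign. Carrying out every twist in turn, $P(K)$ is converted into $P(K_0)$ by $p$ null-homologous $+1$-twists and $n$ null-homologous $-1$-twists, and the preliminary observation yields
\[
\galg(P(K))\leq\galg(P(K_0))+\galg(K).
\]

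Finally I would identify $\galg(P(K_0))$ with $\galg(P(U))$. Since $\galg(K_0)=0$, the knot $K_0$ has trivial Alexander polynomial $\Delta_{K_0}(t)=1$ (this is easily read off from the definition of the algebraic genus). A knot with trivial Alexander polynomial has trivial rational Alexander module, hence trivial Blanchfield pairing; so by the standard behaviour of the Alexander module and Blanchfield pairing under the satellite construction --- for which in particular $\Delta_{P(K_0)}(t)=\Delta_{P(U)}(t)\Delta_{K_0}(t^{w})=\Delta_{P(U)}(t)$ --- the knots $P(K_0)$ and $P(U)$ have isometric Blanchfield pairings. By Trotter's theorem their Seifert forms are therefore $S$-equivalent, and since the algebraic genus depends only on the $S$-equivalence class of the Seifert form we conclude $\galg(P(K_0))=\galg(P(U))$, which combined with the displayed inequality proves the theorem. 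I expect this last identification to be the main obstacle: Theorem~\ref{thm:twist_characterization} only drives $K$ down to a knot of algebraic genus zero rather than to the unknot, so one genuinely needs the invariance of the algebraic genus of a satellite under changing the companion within its $S$-equivalence class, which forces one to bring in the Alexander--Blanchfield machinery rather than arguing purely with twists; by comparison, the simultaneous realisation of the twisting curves used in the first step is a secondary, more routine technical point.
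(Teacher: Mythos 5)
Your proof follows the same three-step skeleton as the paper: (i) twist $K$ down to a knot $K_0$ with $\galg(K_0)=0$ at cost $\galg(K)$, (ii) transfer the twists through the satellite operation, and (iii) identify $\galg(P(K_0))$ with $\galg(P(U))$. Step (ii) essentially reproves the paper's Lemma~\ref{lem:satellite_move}. But the other two steps are carried out differently, and it is worth comparing.

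For step (i), the paper invokes Proposition~\ref{prop:alg_twist_decrease} directly, which hands over $\galg(K)$ \emph{simultaneous} pairs of $\pm1$-twists, to which Theorem~\ref{thm:alg_twisting} applies one pair at a time. You instead start from the statement of Theorem~\ref{thm:twist_characterization} and argue that an arbitrary sequence of $p$ positive and $n$ negative null-homologous twists can be ``unrolled'' into a single link of twisting curves and then freely reordered and paired up. That reordering step is where your argument is shakiest: a curve that is unknotted in the manifold obtained after earlier surgeries need not lift to an \emph{unknotted} curve in the original $S^3$, and unknottedness of the twisting curve is built into the definition of a null-homologous twist (and used in the proof of Theorem~\ref{thm:alg_twisting}). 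You flagged this as a secondary technical point, and it is easily sidestepped -- just quote Proposition~\ref{prop:alg_twist_decrease} to get simultaneously-realized $\pm1$ pairs, as the paper does -- but as written the reduction to Theorem~\ref{thm:alg_twisting} is not fully justified.

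For step (iii), the paper uses Lickorish's explicit block-diagonal Seifert matrix for a satellite, $\begin{pmatrix}M&0\\0&X\end{pmatrix}$ with $M$ a Seifert matrix for $P(U)$ and $\det(tX-X^T)=\Delta_{K_0}(t^w)=1$, to conclude that $P(K_0)$ and $P(U)$ are $S$-equivalent. You instead pass to the Blanchfield pairing, use its multiplicativity under the satellite construction, and quote Trotter's theorem to recover $S$-equivalence. Both routes work, and they both ultimately rest on the same algebraic fact (a Seifert form with unit Alexander polynomial is $S$-trivial); your version is less elementary and imports more machinery than the Lickorish Seifert-matrix computation, which gives the block decomposition by hand. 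One small imprecision: you argue via the \emph{rational} Alexander module, but Trotter's correspondence between $S$-equivalence classes and Blanchfield pairings is over $\Z[t,t^{-1}]$, so you need the integral Alexander module of $K_0$ to be trivial -- which it is, since $\Delta_{K_0}=1$ and knot Alexander modules are $\Z$-torsion-free, but this step should be said.

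In short: correct overall strategy and conclusion, modulo the reordering gap in the preliminary observation; step (iii) is a genuine, valid alternative route, though heavier than the paper's.
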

One striking feature of Theorem~\ref{thm:satellites} is that the upper bound it establishes is independent of the winding number of the pattern $P$. This behaviour should be contrasted with that of both the classical Seifert genus and the smooth slice genus where dependence on the winding number of the pattern is unavoidable. For example, if one takes $K_n$ to be the $(1,n)$-cable of the trefoil, then one can  show that $g(K_n)=g_4(K_n)=n$. However, it follows from Theorem~\ref{thm:satellites} that $\galg(K_n)=\gtop(K_n)=1$.

It is natural to wonder whether there is an analogue of Theorem~\ref{thm:satellites} for the topological slice genus. A detailed discussion of this question and related issues can be found in \cite{Feller19satellite}.

\subsection*{Torus knots}
Whilst the smooth slice genera of torus knots have now been determined by a variety of methods, the topological slice genus remains far less well understood. Rudolph showed that in general the topological slice genus of a torus knot is strictly smaller than the 3-ball genus \cite{Rudolph84Projective}. Later Baader-Feller-Lewark-Liechti constructed further upper bounds on the topological slice genus of torus knots, showing that with the exception of torus knots with $|\sigma(T_{p,q})|=2g_4(T_{p,q})$ the topological slice genus satisfies $\gtop(T_{p,q})\leq \frac67 g_4(T_{p,q})$ \cite{Baader18topological}.

Using null-homologous twisting operations we establish the following upper bound.
\begin{restatable}{thm}{topgenus}\label{thm:top_genus}
For any torus knot or link $T_{p,q}$ with $p,q>1$ we have
\[
\gtop(T_{p,q})\leq \galg(T_{p,q})< \frac{pq}{3}+ p\log_2 q + q\log_2 p.
\]
\end{restatable}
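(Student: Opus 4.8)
The first inequality $\gtop(T_{p,q})\le \galg(T_{p,q})$ is general: it is exactly the estimate used to \emph{define} the algebraic genus, since a Seifert surface realising $\galg$ contains an Alexander-polynomial-one subsurface that bounds a locally flat disc in $B^4$ by Freedman's theorem. So the real task is the strict bound $\galg(T_{p,q})<\tfrac{pq}{3}+p\log_2 q+q\log_2 p$, and the plan is to transform $T_{p,q}$ into a knot of algebraic genus $0$ (an unknot, or a knot of trivial Alexander polynomial) by an explicit sequence of null-homologous twists and then invoke Theorem~\ref{thm:alg_twisting}: each null-homologous twist changes $\galg$ by at most $1$, so it is enough to bound the number of twists used (equivalently, one can phrase the conclusion through Theorem~\ref{thm:twist_characterization}). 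Throughout assume $2\le p\le q$.

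The structure I would aim for has two phases, reflecting the two kinds of term in the bound. First one must present $T_{p,q}$ so that null-homologous twisting is available at all: a positive braid has all strands coherently oriented, so I would isotope the torus link onto the Heegaard torus and twist along meridians of sub-solid-tori, or equivalently run a trivialising parallel copy beside the standard diagram so that families of oppositely-oriented strands appear. In the \emph{dyadic phase} one uses the binary expansion of $q$ so that only $O(\log_2 q)$ ``block'' twisting operations carry the longitudinal winding; since making one such block operation null-homologous costs roughly one elementary twist for each strand it passes through, this phase uses on the order of $p\log_2 q$ null-homologous twists, and the symmetric treatment of the meridional winding of $p$ contributes a further $q\log_2 p$. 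In the \emph{bulk phase} the crossings that remain are grouped into small repeated pieces on which a single null-homologous $\pm1$-twist removes a fixed number --- three --- of crossings, exploiting the exact periodicity of the torus-knot diagram; a block of $\sim pq$ such crossings is then cleared by about $\tfrac{pq}{3}$ twists. Adding the three contributions, with the rounding slack absorbing the strictness, yields the stated inequality. (One could instead try to route the argument through Theorem~\ref{thm:satellites}, realising twisted torus knots as cables of smaller torus knots and recursing, but pinning the leading constant to $\tfrac13$ seems to need the direct diagrammatic estimate.)

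The main obstacle is the compatibility of the two phases with the null-homologicity constraint: at every stage the twisting curves must be genuinely unknotted and have linking number $0$ with the link, and one must track precisely how each twist reshapes the diagram so that the total count is $\tfrac{pq}{3}+p\log_2 q+q\log_2 p$ and not a larger multiple. Obtaining the constant $\tfrac13$, as opposed to the trivial $\tfrac12$ coming from $\galg\le g$, is the real point --- it requires the ``three crossings per twist'' move to mesh with the periodic pattern left after the dyadic phase --- and checking this meshing, rather than any single twisting step or the routine final identification of the terminal knot, is where the work concentrates.
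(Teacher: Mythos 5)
Your high-level scaffolding (a dyadic decomposition of the winding numbers contributing the logarithmic terms, plus a bulk estimate contributing the $pq/3$ term, all fed into Theorem~\ref{thm:alg_twisting}) matches the paper's shape, but the mechanism producing each term is missing or wrong, and in particular the central trick that makes the constant $1/3$ appear is not in your sketch.

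The paper's source of the $1/3$ is not ``three crossings removed per twist.'' No such local move is exhibited, and it is not clear one exists. Instead, the key observation is a \emph{recursive} one: a single null-homologous $-1$-twist converts a positive full twist on $2^{k+1}$ coherently oriented strands into \emph{four} positive full twists on $2^{k}$ strands (two stacked full twists on each of two parallel bundles). Writing $T_k$ for the number of twists needed to undo a full twist on $2^k$ strands gives $T_{k+1}\le 1+4T_k$ with $T_1=1$, hence $T_k\le\frac{4^k-1}{3}$, and since $T_{2^a,2^b}$ ($a\le b$) is $2^{b-a}$ stacked full twists on $2^a$ strands, $\galg(T_{2^a,2^b})<2^{a+b}/3$. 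That recursion is what produces $1/3$; your ``periodicity of the diagram'' heuristic does not reduce to it and would need its own justification.

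The logarithmic terms also arise differently. You attribute them to null-homologous twists needed to ``make block operations null-homologous,'' but the paper gets them from \emph{oriented band moves}, not twists: $T_{a,b+c}$ is obtained from the split link $T_{a,b}\sqcup T_{a,c}$ by $a$ oriented saddle resolutions (delete all $\sigma_b$'s from $(\sigma_1\cdots\sigma_{b+c-1})^a$), and Lemma~\ref{lem:alg_band_attach} turns those $a$ band moves into an additive cost of $a$ in $\galg$. Applying this along the binary expansions $q=\sum_{i=0}^k 2^{a_i}$ and $p=\sum_{j=0}^l 2^{b_j}$ and summing gives the $pk+ql\le p\log_2 q+q\log_2 p$ overhead. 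Because your sketch routes everything through twist counting, it has no analogue of this subadditivity lemma and hence no concrete way to pay for the splitting into power-of-two blocks. To repair the proposal you would need (i) the full-twist recursion on $2^k$ strands as the engine of the bulk phase, and (ii) the band-move concatenation bound $\galg(T_{a,b+c})\le\galg(T_{a,b})+\galg(T_{a,c})+a$ for the dyadic phase.
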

This bound is particularly effective when $p$ and $q$ are both relatively large.
One can measure the asymptotic difference between the smooth and topological slice genera of torus knots by considering the following limit:
\begin{equation*}
\ell := \lim_{\min\{p,q\}\rightarrow \infty} \frac{\gtop(T_{p,q})}{g_4(T_{p,q})}.
\end{equation*}
It is known that this limit exists and satisfies the bounds $\frac{1}{2}\leq \ell< \frac{3}{4}$ \cite{Baader18topological}. Theorem~\ref{thm:top_genus} provides an improved upper bound for $\ell$ by showing that $\ell\leq \frac{2}{3}$.

\subsection*{Structure} In Section~\ref{sec:properties} we set out the properties of the algebraic genus that will be used throughout the paper and prove Theorem~\ref{thm:alg_twisting}. In Section~\ref{sec:decreasing}, we show that there is always a null-homologous $+1$-twist and a $-1$-twist that can be used to decrease the algebraic genus. This gives the proof of Theorem~\ref{thm:twist_characterization}. Then in Section~\ref{sec:satellite} and Section~\ref{sec:torus} contain the results on the algebraic genera of satellite knots and torus knots respectively. Finally we conclude with Section~\ref{sec:anisotropic} where we prove Proposition~\ref{prop:other_twisting}

\subsection*{Acknowledgements}
The statement of Theorem~\ref{thm:satellites} was first found and proven by Peter Feller, Allison N. Miller and Juanita Pinzon-Caicedo \cite{Feller19satellite}. The author is grateful to Peter Feller for bringing this result to his attention and for other illuminating conversations.

\section{Properties of the algebraic genus}\label{sec:properties}
In this section we recap some of the necessary properties of the algebraic genus and prove Theorem~\ref{thm:alg_twisting}. Throughout this paper, all knots and links will be oriented. A Seifert surface for a link $L$ is a connected, oriented, embedded surface $F\subseteq S^3$ with $\partial F=L$. If $L$ has $r$ components, then a genus $g$ Seifert surface has $H_1(F;\Z)\cong \Z^{2g+r-1}$. A Seifert surface comes equipped with its Seifert form $\theta: H_1(F;\Z)\times H_1(F;\Z) \rightarrow \Z$. A subgroup $H\leq H_1(F;\Z)$ of rank $2n$ is said to be Alexander trivial, if for some (equivalently any) basis, the matrix $M$ representing $\theta|_{H}$ has the property that $\det(tM-M^T)=t^n$. We record the following three equivalent definitions of the algebraic genus. The equality of all three quantities were essentially proven by Feller-Lewark, where the third quantity is a variation of their characterization of the algebraic genus in terms of 3-dimensional cobordism distance \cite{Feller18Classical}. 

\begin{prop}\label{prop:3char}
Let $L\subset S^3$ be an oriented $r$-component link. The algebraic genus can be characterized in the following equivalent ways:
\begin{enumerate}
\item \[\galg(L)=\min \left\lbrace g(F)-g(F') \,\middle| \, \parbox{0.5\textwidth}{$F$ is a Seifert surface for $L$ and $F'\subseteq F$ is a subsurface such that $\partial F'=K'$ is a knot with $\Delta_{K'}(t)=1$.} \right\rbrace\]

\item \[\galg(L)=\min \left\lbrace \frac{m-r+1}{2}-n \,\middle| \, \parbox{0.5\textwidth}{$L$ has a Seifert form $\theta: \Z^m \times \Z^m \rightarrow \Z$ with an Alexander trivial subgroup of rank $2n$.} \right\rbrace\]

\item \[\galg(L)=\min \left\lbrace \frac{n-r+1}{2} \,\middle| \, \parbox{0.5\textwidth}{$L$ can be obtained by $n$ oriented bands moves on a knot $K'$ with $\Delta_{K'}(t)=1$.} \right\rbrace\]
\end{enumerate}
\end{prop}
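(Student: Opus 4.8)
The plan is to treat characterization (1) as a hub and to prove $(1)\Leftrightarrow(2)$ and $(1)\Leftrightarrow(3)$ separately; all of this is essentially contained in Feller--Lewark \cite{Feller18Classical}, so the real task is a careful translation between the geometric, algebraic, and band-move pictures. Write $m=\operatorname{rk}H_1(F;\Z)=2g(F)+r-1$ for a Seifert surface $F$ of the $r$-component link $L$. For the easy direction $(1)\Rightarrow(2)$: given $F$ with a subsurface $F'$ as in (1), put $G=\overline{F\setminus F'}$; since $\partial F'=K'$ is a single circle lying in $\partial G$ and every component of $G$ meets $K'$ (otherwise it would be a component of the connected surface $F$ disjoint from $F'$), the surface $G$ is connected. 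Mayer--Vietoris for $F=F'\cup_{K'}G$, using that $[K']=0$ in $H_1(F';\Z)$ because $K'$ bounds $F'$, shows the inclusion-induced map $\iota_*\colon H_1(F';\Z)\to H_1(F;\Z)$ is injective; hence $H:=\iota_*H_1(F';\Z)$ has rank $2n$ with $n=g(F')$, and $\theta|_H$ is the Seifert form of $F'$, so $\det(tM-M^T)=t^n$ because $\Delta_{K'}=1$. Thus $H$ is Alexander trivial and $\tfrac{m-r+1}{2}-n=g(F)-g(F')$, so the minimum in (2) is at most that in (1).

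For $(2)\Rightarrow(1)$, the crux, I would first invoke that $\galg$ is an $S$-equivalence invariant \cite{Feller18Classical} and that any integral matrix $S$-equivalent to a Seifert matrix of $L$ is realised by a Seifert surface of $L$; since enlarging the Alexander-trivial subgroup by the hyperbolic pairs created under stabilisation leaves $\tfrac{m-r+1}{2}-n$ unchanged, we may assume the given form $\theta$ is the Seifert form of an honest surface $F$ with the subgroup $H\le H_1(F;\Z)$. Because $\det(tM-M^T)$ equals $1$ at $t=1$, the intersection form $\theta-\theta^T$ restricts to a unimodular, hence standard symplectic, form on $H$; choosing a symplectic basis of $H$, representing it by a system of simple closed curves on $F$ in standard position, taking a regular neighbourhood, and tubing together the resulting boundary circles produces a subsurface $F'\subseteq F$ with connected boundary $K'$, $\iota_*H_1(F')=H$, $\Delta_{K'}=1$, and $g(F)-g(F')=\tfrac{m-r+1}{2}-n$, giving the reverse inequality. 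The genuine obstacle is exactly this realisation step: an arbitrary symplectic basis of a unimodular submodule of $H_1(F;\Z)$ is not obviously represented by embedded curves in standard position. This normal-form statement --- typically obtained by first moving $H$ onto a standard direct summand of $H_1(F;\Z)$, using that the mapping class group of $F$ (after stabilising if necessary) acts with enough transitivity on such submodules --- is precisely what Feller--Lewark establish, and I would cite their argument rather than reprove it.

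Finally, $(1)\Leftrightarrow(3)$ is a handle count. With $G=\overline{F\setminus F'}$ connected and containing the circle $K'$ in its boundary, $G$ is obtained from an annular collar of $K'$ by attaching $1$-handles, and $\chi(F)=\chi(F')+\chi(G)$ forces their number to be $n=2(g(F)-g(F'))+r-1$; attaching these bands to $F'$ in succession exhibits $L$ as the result of $n$ oriented band moves on $K'$ with $\tfrac{n-r+1}{2}=g(F)-g(F')$. Conversely, given $n$ oriented band moves from a knot $K'$ with $\Delta_{K'}=1$ to $L$, take a Seifert surface $\Sigma'$ for $K'$ and, whenever a band meets $\operatorname{int}\Sigma'$ or another band, tube $\Sigma'$ out of the way --- which preserves $\Delta_{K'}=1$ since each tube contributes a hyperbolic summand --- so that $F=\Sigma'\cup(\text{bands})$ is a Seifert surface for $L$ containing $\Sigma'$; then $\chi(F)=\chi(\Sigma')-n$ gives $g(F)-g(\Sigma')=\tfrac{n-r+1}{2}$. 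Taking minima over all configurations in each direction yields $(1)=(3)$, and since $n\equiv r-1\pmod 2$ and $n\ge r-1$ all three displayed quantities are non-negative integers. Apart from the realisation step in $(2)\Rightarrow(1)$, everything above is routine.
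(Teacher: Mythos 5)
Your proof is correct and follows the same overall architecture as the paper's: use characterization (1) as the hub, delegate the hard realization step of $(2)\Rightarrow(1)$ to Feller--Lewark (the paper cites this as Lemma~\ref{lem:subgroups_to_surfaces}, i.e.\ \cite[Proposition~9]{Feller18Classical}), and handle $(1)\Leftrightarrow(3)$ by an Euler-characteristic/band count. Your $(1)\Rightarrow(2)$ Mayer--Vietoris argument and your $(1)\Rightarrow(3)$ handle count are both essentially identical to what the paper does. The genuine divergence is in the direction $(3)\Rightarrow(1)$. The paper proves this by iterating Lemma~\ref{lem:band_addition} (each fission band leaves $\galg$ non-increasing, each fusion band increases it by at most one), which in turn is proved by making a \emph{single} band disjoint from a stabilized Seifert surface by choosing a diagram in which that band is a short planar band. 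You instead build the Seifert surface for $L$ all at once by attaching all $n$ bands to a Seifert surface $\Sigma'$ for $K'$ and tubing to resolve intersections. Both work; the paper's one-band-at-a-time route has the virtue of isolating Lemma~\ref{lem:band_addition}, which it reuses later (in Lemma~\ref{lem:surface_gluing} for the torus knot bound), whereas your all-at-once construction is self-contained and slightly more direct, and has the small advantage of not invoking Lemma~\ref{lem:surface_stabilization}.

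One small imprecision worth flagging in your $(3)\Rightarrow(1)$: when two bands intersect each other, tubing the subsurface $\Sigma'$ does not resolve that intersection --- you need to tube whichever surface you have already built (which may include earlier bands) around the offending band, iterating until everything is embedded. This is standard and converges after finitely many tubes, but as written the remedy you describe (tube $\Sigma'$) only addresses band-versus-$\Sigma'$ intersections. Also note that the tubed surface is a new Seifert surface for $K'$, and what you actually need is simply that its boundary is $K'$ with $\Delta_{K'}=1$ --- the remark about hyperbolic summands is true but superfluous, since the Alexander polynomial is a knot invariant independent of the choice of Seifert surface.
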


The following lemma shows the equivalence of the first two definitions. We refer the reader to \cite[Proposition~9]{Feller18Classical} for proof.
\begin{lem}\label{lem:subgroups_to_surfaces}
Given a link $L$ with a Seifert surface $F$ of genus $g$ and corresponding Seifert form $\theta$. There is an Alexander trivial subgroup of rank $2n$ in $H_1(F;\Z)$ if and only if $F$ contains a connected genus $n$ subsurface $F'$, where $\partial F'=K'$ is a knot with $\Delta_{K'}=1$.  \qed
\end{lem}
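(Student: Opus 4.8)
The plan is to prove the two implications separately, with the passage from an Alexander trivial subgroup to a subsurface being the substantive direction.

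For the ``surface $\Rightarrow$ subgroup'' direction, suppose $F'\subseteq F$ is a connected genus $n$ subsurface with $\partial F'=K'$ a knot disjoint from $L=\partial F$ and $\Delta_{K'}(t)=1$. First I would check that the inclusion induces an injection $i_*\colon H_1(F';\Z)\hookrightarrow H_1(F;\Z)$. Writing $F=F'\cup F''$ with $F''=\overline{F\setminus F'}$, so $F'\cap F''=\partial F'$ is a single circle, the relevant portion of the Mayer--Vietoris sequence is
\[
H_1(\partial F')\xrightarrow{\ \phi\ }H_1(F')\oplus H_1(F'')\xrightarrow{\ \psi\ }H_1(F)\xrightarrow{\ \partial\ }H_0(\partial F').
\]
Since $\partial F'$ is connected, $\partial$ is injective, so $\ker\psi=\operatorname{im}\phi$; and since $[\partial F']$ is a product of commutators of a genus $n$ basis it vanishes in $H_1(F')$, so $\operatorname{im}\phi\subseteq 0\oplus H_1(F'')$ and hence $i_*=\psi|_{H_1(F')\oplus 0}$ is injective. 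Setting $H=i_*(H_1(F';\Z))$, this is a rank $2n$ subgroup on which $\theta$ restricts to the Seifert form of the Seifert surface $F'$ for $K'$. Because $\Delta_{K'}(t)=1$, any $2n\times 2n$ Seifert matrix $M$ for $F'$ satisfies $\det(tM-M^T)=t^n$ (the leading coefficient being pinned down by $\det(M-M^T)=1$, the intersection form of a one-holed genus $n$ surface being standard), so $H$ is Alexander trivial.

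For the converse, let $H\le H_1(F;\Z)$ be Alexander trivial of rank $2n$, with representing matrix $M$ of size $2n$. Two algebraic observations come first. Evaluating $\det(tM-M^T)=t^n$ at $t=1$ gives $\det(M-M^T)=1$, so the intersection form $\theta-\theta^T$ restricts to a unimodular alternating form on $H$; in particular it is standard symplectic and $H$ has a symplectic basis $a_1,b_1,\dots,a_n,b_n$. Also, if $H\subseteq\widetilde H$ with $\widetilde H/H$ finite, then comparing representing matrices through an integral change-of-basis matrix $P$ gives $t^n=(\det P)^2\det(t\widetilde M-\widetilde M^T)$, forcing $\det P=\pm1$; hence $H$ is a direct summand of $H_1(F;\Z)$. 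The plan is then to realise $H$ geometrically: exhibit a ``standard'' genus $n$ subsurface $S_0\subseteq F$ with connected boundary, lying in the interior of $F$ and spanned by $n$ of the $g$ handles of $F$ (note $n\le g$, since $H$ lies in the nondegenerate part of the intersection form), whose homology is a symplectic summand $H_0$; then pick an ambient diffeomorphism $\varphi$ of $F$, supported away from $\partial F$, whose action on $H_1(F;\Z)$ carries $H_0$ onto $H$, and set $F'=\varphi(S_0)$. Then $F'$ is a connected genus $n$ subsurface, $K'=\partial F'$ is a knot, and $\theta$ restricts on $i_*H_1(F')=H$ to the Seifert form of $F'$ with a matrix conjugate to $M$, so $\det(tM-M^T)=t^n$ forces $\Delta_{K'}(t)=1$.

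The main obstacle is precisely this last realisation step: producing a diffeomorphism of $F$ taking $H_0$ onto $H$. This rests on the change-of-coordinates principle for surfaces with boundary, namely surjectivity of the action of $\mathrm{MCG}(F,\partial F)$ on $H_1(F;\Z)$ onto the group of automorphisms preserving the intersection form together with the classes of the boundary components. The care needed for links, where the intersection form on $H_1(F;\Z)$ is degenerate with radical spanned by the boundary classes, is that one must move $H_0$ to $H$ by an automorphism simultaneously matching up the orthogonal complements and the radical; this is possible because $H$ and $H_0$ meet the radical trivially (by unimodularity) and $H_0^{\perp}$ and $H^{\perp}$ carry isomorphic standard alternating forms of the same rank. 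With that geometric input both implications close up.
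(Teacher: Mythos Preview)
The paper does not prove this lemma itself; it simply refers the reader to Proposition~9 of Feller--Lewark and places a \qed. So there is no in-paper argument to compare against, only the cited one.

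Your proof is correct and is essentially the standard argument (and, as far as one can tell, the same one Feller--Lewark give). The ``surface $\Rightarrow$ subgroup'' direction is clean: the Mayer--Vietoris check that $i_*$ is injective is right, and the restriction of the ambient Seifert form to $i_*H_1(F')$ really is the Seifert form of $F'$ as a Seifert surface for $K'$. For the converse you correctly extract from $\det(tM-M^T)\big|_{t=1}=1$ that the intersection form is unimodular on $H$, hence $H$ is a symplectic direct summand of $H_1(F;\Z)$ disjoint from the radical, and then invoke the change-of-coordinates principle to carry a standard genus~$n$ one-holed subsurface onto one whose $H_1$ image is $H$.

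The only place a referee might ask for a sentence more is the surjectivity statement for $\mathrm{MCG}(F,\partial F)$ when $L$ has several components: the assertion that every automorphism of $H_1(F;\Z)$ preserving the intersection form and fixing each boundary class is realised by a diffeomorphism rel boundary is true but less commonly quoted than the one-boundary case $\mathrm{MCG}(\Sigma_{g,1})\twoheadrightarrow \mathrm{Sp}(2g,\Z)$. You have already identified exactly what is needed---an automorphism taking $H_0$ to $H$ that restricts to the identity on the radical---and your observation that $H_1(F)=H\oplus H^{\perp}$ with the radical contained in $H^{\perp}$ (and likewise for $H_0$) is precisely what makes such an automorphism easy to write down. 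Alternatively one can cap off all but one boundary component, apply the classical symplectic surjection, and then isotope the resulting subsurface off the capping disks. Either route completes the argument.
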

Although it is not known in general which Seifert surfaces for a link realize the algebraic genus, it turns out that any Seifert surface can be stabilized until it realizes the algebraic genus. The following is a consequence of the results of \cite[Section~2]{Feller18Classical}

\begin{lem}\label{lem:surface_stabilization} Let $L$ be an oriented link with $r$ components and let $F$ be a Seifert surface for $L$. Then $F$ can be stabilized to yield a surface $\widetilde{F}$ containing a subsurface $\widetilde{F}'$ such that $\partial \widetilde{F}'$ is a knot with Alexander polynomial one and $\galg(L)=g( \widetilde{F})-g(\widetilde{F}')$. \qed
\end{lem}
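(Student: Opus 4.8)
The plan is to reduce the statement to Lemma~\ref{lem:subgroups_to_surfaces} by showing that, after finitely many stabilizations of $F$, the resulting Seifert form contains an Alexander trivial subgroup of the rank dictated by $\galg(L)$. First I would fix, using characterization~(2) of Proposition~\ref{prop:3char}, a Seifert form $\theta_0\colon \Z^m\times\Z^m\to\Z$ for $L$ realizing the algebraic genus, together with an Alexander trivial subgroup $H_0\leq\Z^m$ of rank $2n$, so that $\galg(L)=\frac{m-r+1}{2}-n$. By Lemma~\ref{lem:subgroups_to_surfaces} this form is realized by some Seifert surface $G$ of genus $\frac{m-r+1}{2}$ that contains a genus $n$ subsurface $G'$ with $\partial G'$ Alexander-polynomial-one, and $\galg(L)=g(G)-g(G')$. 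So it suffices to show that the given surface $F$ and the surface $G$ have a common stabilization, i.e. both become isotopic after adding finitely many trivial handles (connect-summing with unknotted, unlinked Hopf-band-pairs / trivial tubes).

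The key input here is the stable equivalence of Seifert surfaces: any two Seifert surfaces for the same link in $S^3$ become ambient isotopic after sufficiently many stabilizations. This is a classical fact (due to, e.g., the work extending Reidemeister–Singer moves to the relative setting; it also follows from the tube-equivalence results used by Feller–Lewark in \cite[Section~2]{Feller18Classical}). Granting this, let $\widetilde F$ be a common stabilization of $F$ and $G$. Then $\widetilde F$ still contains the subsurface $G'$ (stabilizations can be performed in a ball disjoint from $G'$), so $\partial G'$ is a knot with Alexander polynomial one. Moreover $g(\widetilde F)-g(G') = \bigl(g(G)+k\bigr)-g(G')$ where $k$ is the number of stabilizations added to $G$; this is not yet $\galg(L)$, so one more step is needed.

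To fix the genus count I would instead argue at the level of Seifert forms, which is cleaner than tracking subsurfaces. Each stabilization changes the Seifert form by an algebraic stabilization: $\theta\mapsto\theta\oplus\left(\begin{smallmatrix}0&1\\0&0\end{smallmatrix}\right)$ or $\theta\oplus\left(\begin{smallmatrix}0&0\\1&0\end{smallmatrix}\right)$, up to a change of basis (an $S$-equivalence move). Starting from $F$ with form $\theta_F$, after the stabilizations needed to reach the common surface $\widetilde F$ the form becomes $\theta_F$ with some hyperbolic summands added; but $\widetilde F$ is also a stabilization of $G$, so its form is $S$-equivalent to $\theta_0$ with hyperbolic summands added. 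Now the crucial observation: a hyperbolic rank-$2$ summand $\left(\begin{smallmatrix}0&1\\0&0\end{smallmatrix}\right)$ is itself Alexander trivial (its polynomial is $t$), so $H_0\oplus(\text{these summands})$ is again Alexander trivial, of rank $2(n+s)$ where $s$ is the number of added handles. If $\widetilde F$ has genus $\frac{m-r+1}{2}+s$, then $\frac{\bigl((2\cdot\tfrac{m-r+1}{2}+2s)+r-1\bigr)}{2}-(n+s)=\frac{m-r+1}{2}-n=\galg(L)$, so by Lemma~\ref{lem:subgroups_to_surfaces} applied to $\widetilde F$ we get a genus $n+s$ subsurface $\widetilde F'$ with Alexander-polynomial-one boundary and $g(\widetilde F)-g(\widetilde F')=\galg(L)$, as desired.

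The main obstacle I expect is justifying the common-stabilization (relative Reidemeister–Singer) step at the level of generality needed here — links rather than knots, and making sure the stabilizations can be taken disjoint from a prescribed subsurface — together with the bookkeeping that matches "geometric stabilization of a surface" with "algebraic hyperbolic stabilization of the Seifert form." Both are standard but slightly delicate, which is presumably why the authors cite \cite[Section~2]{Feller18Classical} rather than reproving it; in a full write-up I would quote the precise statement from that reference and then only need the short form-level argument above.
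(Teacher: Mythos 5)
The paper itself does not prove this lemma (it cites Feller–Lewark's Section~2 and appends \qed), so there is no "paper's own proof" to compare against; but your argument is the natural one and is almost certainly what the cited reference carries out. The structure is sound: realize a genus-minimizing form $\theta_0$ by an actual Seifert surface $G$ (characterization~(2) of Proposition~\ref{prop:3char} plus Lemma~\ref{lem:subgroups_to_surfaces}), invoke stable equivalence of Seifert surfaces to obtain a common stabilization $\widetilde F$ of $F$ and $G$, observe that the Seifert form of $\widetilde F$ is then congruent to $\theta_0$ plus hyperbolic blocks, note that each hyperbolic block $\left(\begin{smallmatrix}0&1\\0&0\end{smallmatrix}\right)$ (or its transpose) has $\det(tM-M^T)=t$ and so can be adjoined to $H_0$ to give an Alexander trivial subgroup of the enlarged form, and finally apply Lemma~\ref{lem:subgroups_to_surfaces} to $\widetilde F$ to produce the subsurface $\widetilde F'$. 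That is the right decomposition and the right key inputs, and you are correct that the only nontrivial geometric input is the common-stabilization theorem for Seifert surfaces of a link.

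One small arithmetic slip worth flagging: your displayed computation $\frac{\bigl((2\cdot\tfrac{m-r+1}{2}+2s)+r-1\bigr)}{2}-(n+s)$ simplifies to $\frac{m}{2}-n$, which equals $\frac{m-r+1}{2}-n$ only when $r=1$. You have effectively written $\frac{m'}{2}-n'$ where characterization~(2) requires $\frac{m'-r+1}{2}-n'$. The simplest fix is to compute directly: $g(\widetilde F)-g(\widetilde F')=\bigl(\tfrac{m-r+1}{2}+s\bigr)-(n+s)=\tfrac{m-r+1}{2}-n=\galg(L)$, which also matches the statement of the lemma more literally. This is cosmetic and does not affect the validity of the argument.
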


The following lemma shows how the algebraic genus changes under oriented band moves.
\begin{lem}\label{lem:band_addition}\label{lem:alg_band_attach}
Let $L$ be an $r$ component link and $L'$ an $r+1$ component link related by an oriented band move. Then
\[
\galg(L')\leq \galg(L)\leq \galg(L') +1.
\]
\end{lem}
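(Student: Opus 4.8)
The plan is to exploit characterization (3) from Proposition~\ref{prop:3char}, which expresses $\galg$ in terms of the minimal number of oriented band moves needed to pass from an Alexander polynomial one knot to the given link. The key observation is that an oriented band move is an invertible operation in a suitable sense: if $L'$ is obtained from $L$ by a single band move that splits one component into two, then $L$ is recovered from $L'$ by an oriented band move that fuses those two components back together. So band moves between $L$ and $L'$ come in inverse pairs, and this symmetry is what produces the two-sided inequality.

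For the upper bound $\galg(L')\leq \galg(L)$, I would start from a Seifert surface realizing the algebraic genus of $L$ as in Lemma~\ref{lem:surface_stabilization}: write $\galg(L)=g(\widetilde F)-g(\widetilde F')$ where $\partial\widetilde F'=K'$ has Alexander polynomial one. The band move taking $L$ to $L'$ can be arranged to be attached to $\widetilde F$ away from $\widetilde F'$ (after an isotopy, since the band can be pushed off the genus-reducing subsurface, or if not, one first isotopes the band into the complement of $F'$); attaching a band to a Seifert surface produces a Seifert surface $\widetilde F \cup (\text{band})$ for $L'$ of the same genus (an oriented band move connecting two different components of the boundary does not change the genus, only the number of boundary components), and it still contains $\widetilde F'$ as a subsurface. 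Hence $\galg(L')\leq g(\widetilde F)-g(\widetilde F')=\galg(L)$. The subtlety here — the step I expect to be the main obstacle — is arguing carefully that the band can be made disjoint from the Alexander-trivial subsurface while remaining an oriented band realizing the same band move on the link; this requires tracking which boundary components the band connects and using that $F'$ has connected boundary a single knot, so the band (which joins the two components of $\partial F'$'s complement in $\partial L'$... ) can be isotoped off $F'$.

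For the lower bound $\galg(L)\leq \galg(L')+1$, I would use characterization (3) directly. By Lemma~\ref{lem:surface_stabilization} applied to $L'$, realize $\galg(L')$ by band moves, or more cleanly: take an optimal configuration for $L'$ given by (3), so $L'$ is obtained from some $K''$ with $\Delta_{K''}=1$ by $n'$ oriented band moves with $\galg(L')=\tfrac{n'-(r+1)+1}{2}=\tfrac{n'-r}{2}$. Composing with the inverse band move that fuses $L'$ back to $L$ exhibits $L$ as obtained from $K''$ by $n'+1$ oriented band moves, giving $\galg(L)\leq \tfrac{(n'+1)-r+1}{2}=\tfrac{n'-r}{2}+1=\galg(L')+1$. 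Here one must check that the fusion band is genuinely an oriented band move (orientations match because $L$ and $L'$ are oriented and the original move was orientation-compatible) and that the count in (3) is applied with the correct value of $r$ for each link. Both directions together give the claimed inequality $\galg(L')\leq\galg(L)\leq\galg(L')+1$.
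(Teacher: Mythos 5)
There are two concrete problems with your proposal, one logical and one technical.

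\textbf{The circularity.} For the second inequality you invoke characterization~(3) of Proposition~\ref{prop:3char}. But look at how the paper proves Proposition~\ref{prop:3char}: the inequality ``if $L$ is obtained from an Alexander polynomial one knot by $n$ band moves then $\galg(L)\leq\frac{n-r+1}{2}$'' is deduced precisely from Lemma~\ref{lem:band_addition}. So the equivalence you want to use is downstream of the lemma you are trying to prove, and your second-direction argument is circular. In the paper the logical order is: prove the band-move lemma first, directly from definition~(1), and only afterwards deduce characterization~(3) from it.

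\textbf{The ``main obstacle'' is real, and your fix misses the point.} In your first direction you start from a surface $\widetilde F$ realizing $\galg(L)$ and then try to place the band. The issue is not merely that the band should avoid the Alexander-trivial subsurface $\widetilde F'$; the band $B$ must be disjoint from the interior of the \emph{entire} surface $\widetilde F$, since otherwise $\widetilde F\cup B$ is not embedded. There is no reason an isotopy making $B$ disjoint from $\widetilde F$ can be done while keeping $\widetilde F$ and its subsurface fixed, because any such isotopy moving $L$ and $B$ simultaneously drags $\widetilde F$ along with it. The paper avoids this entirely by reversing the order of operations: first isotope $L$ and $B$ so that $B$ is a short planar band in a diagram, then run Seifert's algorithm to produce a Seifert surface $F$ disjoint from $B$ by construction, and only then stabilize $F$ (a local operation that can be performed away from $B$) to realize $\galg(L)$ via Lemma~\ref{lem:surface_stabilization}. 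Once you set it up this way, both inequalities drop out of the single construction ``attach $B$ to $F$'' by tracking whether the band increases or decreases the number of boundary components (which changes the genus of $F\cup B$ by $0$ or $+1$ respectively), and you never need characterization~(3) at all. You should restructure the proof along those lines: fix the band, then build the surface, then stabilize, and apply the genus count in both directions.
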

\begin{proof}
Suppose that $L_2$ is obtained from $L_1$ by an oriented band move, where $L_1$ has $m$ components and $L_2$ has $m\pm 1$ components. Choose a connected Seifert surface $F$ for $L_1$ disjoint from the band $B$ realizing the band move being performed. This is always possible, since we may choose a diagram for $L_1$ so that the band $B$ appears as a short planar band between two strands. Applying Seifert's algorithm to such a diagram yields a Seifert surface which can be made disjoint from $B$. Furthermore, Lemma~\ref{lem:surface_stabilization} shows that by stabilizing we may assume that $F$ realizes the algebraic genus. Thus $F$ contains a subsurface $F'$ such that $\partial F$ is a knot with Alexander polynomial one and $\galg(L_1)=g(F)-g(F')$. Take $F''$ to be the Seifert surface for $L_2$ obtained by attaching the band $B$ to $F$. Clearly $F'$ is still a subsurface of $F''$ so \[\galg(L_2)\leq g(F'') - g(F').\]
If $L_2$ has $m+1$ components, then $g(F'')=g(F)$. Taking $L=L_1$ and $L'=L_2$ in this case shows that $\galg(L')\leq \galg(L)$. If $L_2$ has $m-1$ components, then $g(F'')=g(F)+1$ . Taking $L=L_2$ and $L'=L_1$ in this case shows that $\galg(L)\leq \galg(L')+1$. This proves the two required inequalities.
\end{proof}

\begin{proof}[Proof of Proposition~\ref{prop:3char}]
The equality of the first two definitions follows from Lemma~\ref{lem:subgroups_to_surfaces}. We prove equality between the first and third definitions. Suppose that a link $L$ can be obtained from an Alexander polynomial one knot $K'$ by $n$ oriented band moves. Suppose that $n_+$ of these band moves increase the number of components and $n_-$ of the moves decrease the number of components. Since $n_+ + n_-=n$ and $n_+-n_- = r-1$, we see that $n_-=\frac{n-r+1}{2}$. Thus Lemma~\ref{lem:band_addition} shows $\galg(L')\leq \frac{n-r+1}{2}$. Conversely, suppose that $F$ is a Seifert surface for $L$ and $F'$ a subsurface cobounding an Alexander polynomial one knot $K'$ realizing the algebraic genus. Consider the surface $\Sigma=F \setminus \Int F'$. The surface $\Sigma$ can be constructed by starting with $K'$ and attaching bands. Since $g(\Sigma)=\galg(L)= g(F)-g(F')$, the surface $\Sigma$ can be constructed by attaching oriented $2\galg(L) + r-1$ bands to $K'$. Thus $L$ can be constructed from $K'$ by $2\galg(L) + r-1$ band moves, as required.
\end{proof}

We conclude the section by proving Theorem~\ref{thm:alg_twisting}.
\algtwisting*
\begin{proof}
Suppose that $L'$ is obtained from $L$ by a null-homologous $m$-twist and a null-homologous $n$-twist. That is, $L'$ is obtained from $L$ by performing $m$-surgery and $n$-surgery on two unknotted curves, say $C_1$ and $C_2$. First we construct a nice Seifert surface for $L$. As shown in Figure~\ref{fig:surface_choice}, we can choose a diagram for $L$ such that Seifert's algorithm yields a Seifert surface $F$ for $L$ which is disjoint from $C_1$ and $C_2$. Moreover we can choose a basis $H_1(F;\Z)$ such that the classes linking non-trivially with $C_1$ and $C_2$ can be represented by a collection of disjoint curves forming an unlink. Lemma~\ref{lem:surface_stabilization} shows that by further stabilizing $F$ we can assume that it realizes the algebraic genus of $L$.
\begin{figure}
  \begin{overpic}[width=0.9\textwidth]{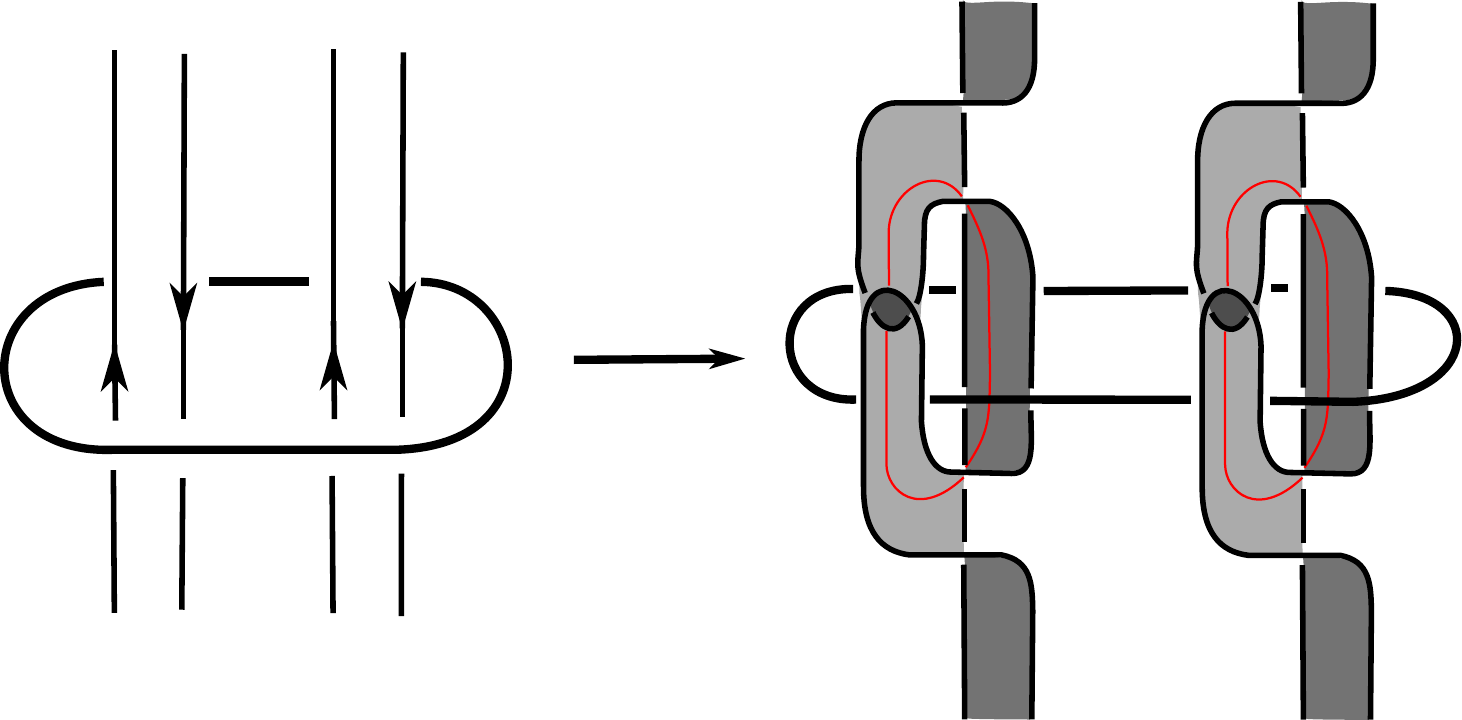}
    \put (33,30) {$C$}
    \put (101,29) {$C$}
    \put (15,23) {{\LARGE $\dots$}}
    \put (74,25) {{\LARGE $\dots$}}
  \end{overpic}
  \caption{Choosing a nice surface to twist. The red curves represent the only homology classes in the basis passing linking with the surgery curve. }
  \label{fig:surface_choice}
  \end{figure}  

Thus with respect to an appropriate ordering of the bases, we can assume that $L$ and $L'$ have Seifert matrices $M$ and $M'$ of the form 
\[
M=\begin{pmatrix}\begin{array}{c c c}
\BlockMatrix{0} & B & F_1 \\
C & \BlockMatrix{0} & F_2 \\
F_3 & F_4 & F_5
\end{array}
\end{pmatrix},
\]
and
\[
M'=\begin{pmatrix}\begin{array}{c c c}
\BlockMatrix{-m} & B & F_1 \\
C &
\BlockMatrix{-n} & F_2 \\
F_3 & F_4 & F_5
\end{array}
\end{pmatrix}.
\]
If $-mn$ is a square, then we can assume that $m$ and $n$ take the form $m=-ax^2$ and $n=ay^2$, for some integers $x,y$ and $a$. By stabilizing $M'$ we obtain a new Seifert matrix $M''$ for $L'$:
\[
M''=\begin{pmatrix}\begin{array}{c c c | c c}
\BlockMatrix{ax^2} & B & F_1 
&\ColumnMatrix{-ax}
&\ColumnMatrix{0}
\\
C &
\BlockMatrix{-ay^2} & F_2
&\ColumnMatrix{0}
&\ColumnMatrix{0}\\
F_3 & F_4 & F_5 &\ColumnMatrix{0}
&\ColumnMatrix{0}\\ \hline
\RowMatrix{0} & \RowMatrix{0} & \RowMatrix{0} & 0 & 1\\
\RowMatrix{0} & \RowMatrix{0} & \RowMatrix{0} & 0 & 0

\end{array}
\end{pmatrix}.
\]
Consider the following matrix identity:

{\small
\begin{equation}\label{eq:matrix_id}
\begin{pmatrix}
1& 0& x& 0\\
0& 1& y& ay\\
0& 0& 1& 0\\
0& 0& 0& 1
\end{pmatrix}
\begin{pmatrix}
A+ax^2& B& -ax& 0\\
C& D-ay^2& 0& 0\\
0& 0& 0& 1\\
0& 0& 0& 0
\end{pmatrix}
\begin{pmatrix}
1& 0& 0& 0\\
0& 1& 0& 0\\
x& y& 1& 0\\
0& ay& 0& 1
\end{pmatrix}
=
\begin{pmatrix}
A& B& -ax& x\\
C& D& 0& y\\
0& ay& 0& 1\\
0& 0& 0& 0
\end{pmatrix}.
\end{equation}}
By replacing the entries of the matrices in \eqref{eq:matrix_id} by identity matrices and block matrices of the appropriate size, we see that there is an invertible matrix $P$ such that
\[
P^TM''P=\begin{pmatrix}\begin{array}{c c c | c c}
\BlockMatrix{0} & B & F_1 
&\ColumnMatrix{-ax}
&\ColumnMatrix{x}
\\
C &
\BlockMatrix{0} & F_2
&\ColumnMatrix{0}
&\ColumnMatrix{y}\\
F_3 & F_4 & F_5 &\ColumnMatrix{0}
&\ColumnMatrix{0}\\ \hline
\RowMatrix{0} & \RowMatrix{ay} & \RowMatrix{0} & 0 & 1\\
\RowMatrix{0} & \RowMatrix{0} & \RowMatrix{0} & 0 & 0

\end{array}
\end{pmatrix}.
\]
Since the upper left submatrix of $P^TM''P$ is precisely $M$, this shows that $L'$ has a Seifert matrix obtained by adjoining two additional rows and columns to $M$. Since we started with a surface $F$ realizing the algebraic genus, it follows that
\begin{equation}\label{eq:twist_ub}
\galg(L') \leq \galg(L) +1.
\end{equation}

Since $L'$ can be obtained from $L$ by a null-homologous $-m$-twist and a null-homologous $-n$-twist, we can reverse the roles of $L$ and $L'$
Since the roles of $L$ and $L'$ can be reversed in \eqref{eq:twist_ub}. This shows that
\[
|\galg(L)-\galg(L')|  \leq 1,
\]
as required.
\end{proof}

\section{Decreasing the algebraic genus}\label{sec:decreasing}
Theorem~\ref{thm:alg_twisting} accounts for half of Theorem~\ref{thm:twist_characterization}. In order to complete the proof we need to show that there are always pairs of null-homologous twisting operations that decrease the algebraic genus. This can be done by adapting the argument used by Livingston to prove that any knot can be converted to the unknot using at most $2g$ null-homologous twists \cite{Livingston19twist}.
\begin{prop}\label{prop:alg_twist_decrease}
Given a link $L$ with $\galg(L)>0$, then $L$ can be obtained from a link $L'$ with $\galg(L')=\galg(L)-1$ by a null-homologous $+1$-twist and a null-homologous $-1$-twist.
\end{prop}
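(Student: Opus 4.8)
The plan is to mimic the structure of the proof of Theorem~\ref{thm:alg_twisting}: produce, on a suitable Seifert surface realizing $\galg(L)$, a pair of unknotted null-homologous curves $C_1, C_2$ (a $+1$-curve and a $-1$-curve in the sense of performing $(-1)$- and $(+1)$-surgeries) whose simultaneous twisting peels off exactly one from the algebraic genus. Concretely, by Lemma~\ref{lem:surface_stabilization} we may take a Seifert surface $F$ for $L$ realizing $\galg(L)$, so $F$ contains a subsurface $F'$ with $\partial F' = K'$ a knot with $\Delta_{K'} = 1$ and $\galg(L) = g(F) - g(F')$. Since $\galg(L) > 0$, we have $g(F) > g(F')$, so $\Sigma := F \setminus \Int F'$ has positive genus; choose a symplectic pair of curves $\alpha, \beta$ in $\Sigma$ (disjoint from $F'$) spanning a genus-one handle, with $\alpha \cdot \beta = 1$. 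These represent the "extra" genus we want to remove. By Feller-Lewark's characterization (Proposition~\ref{prop:3char}(2)), it suffices to exhibit, after the two twists, a Seifert form for the new link $L'$ admitting an Alexander trivial subgroup of one larger rank — equivalently (Lemma~\ref{lem:subgroups_to_surfaces}) a subsurface bounding an Alexander polynomial one knot whose complement has genus $\galg(L) - 1$.

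The key computation, following Livingston's twisting argument, is the following. Restrict the Seifert form $\theta$ to the span of $\{\alpha, \beta\}$; with a suitable choice of the curves (and after adding a canceling stabilization pair to $F$ if necessary, as in the proof of Theorem~\ref{thm:alg_twisting}) the relevant $2\times 2$ (or slightly larger) block of the Seifert matrix can be normalized. Now take $C_1$ to be an unknotted curve bounding a disk that $F$ meets in a pair of oppositely-oriented points lying on $\alpha$ (so $lk(C_1, L) = 0$ and the curve is null-homologous), and similarly $C_2$ for $\beta$; performing a $(-1)$-twist along $C_1$ and a $(+1)$-twist along $C_2$ adds $+1$ to the relevant self-linking of $\alpha$ and $-1$ to that of $\beta$. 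One then checks — this is where $-(-1)(+1) = 1$ being a square is used, exactly as in \eqref{eq:matrix_id} with $a = x = y = 1$ — that the modified $2\times 2$ block, together with a stabilization, can be transformed by a congruence $P^T(\cdot)P$ so that the genus-one handle spanned by $\alpha, \beta$ becomes absorbed into an Alexander trivial subgroup enlarged by rank $2$. Since the rest of the Seifert matrix is untouched, the new link $L'$ has $\galg(L') \le \galg(L) - 1$; and by Theorem~\ref{thm:alg_twisting} (the two twists are, after reversing orientation/sign, a $+1$- and a $-1$-twist with $-mn$ a square) the reverse operation gives $\galg(L) \le \galg(L') + 1$, forcing equality $\galg(L') = \galg(L) - 1$.

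The main obstacle is arranging that the chosen curves $C_1, C_2$ are genuinely \emph{unknotted} in $S^3$ (not merely null-homologous on $F$) and that the two twists indeed effect precisely the prescribed $\pm 1$ changes to the Seifert matrix with no cross-terms introduced between the handle $\langle \alpha, \beta\rangle$ and the rest of $F$ — equivalently, that the curves can be isotoped to bound small disks meeting $F$ in exactly two points, entirely inside the handle region. This is the content of Livingston's construction in \cite{Livingston19twist} and requires choosing a good diagram/band presentation of $F$ in which the $\alpha,\beta$ handle appears in standard position (a clasp or a pair of parallel bands), so that the twisting region is literally a local tangle as in Figure~\ref{fig:sample_operation}; once this local picture is set up, the matrix bookkeeping reduces to the identity \eqref{eq:matrix_id} already recorded in the proof of Theorem~\ref{thm:alg_twisting}. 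A secondary point to verify is that the Alexander polynomial one condition on $\partial F'$ is preserved, which is immediate since $F'$ and its boundary are untouched by the twisting, the curves $C_1, C_2$ being supported away from $F'$.
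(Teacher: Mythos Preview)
Your proposal has a genuine gap at the central step. You claim that twisting along meridians $C_1,C_2$ of the bands $a,b$ changes only the diagonal entries $\theta(\alpha,\alpha)$ and $\theta(\beta,\beta)$ by $+1$ and $-1$ respectively, and that after this modification ``the genus-one handle spanned by $\alpha,\beta$ becomes absorbed into an Alexander trivial subgroup enlarged by rank $2$.'' But the $2\times 2$ block $\left(\begin{smallmatrix} p & q \\ q\pm 1 & r\end{smallmatrix}\right)$ on $\langle\alpha,\beta\rangle$ has \emph{arbitrary} entries $p,q,r$ (the bands $a,b$ may be knotted, twisted, and linked with each other), and shifting the diagonal by $(+1,-1)$ does not make this block, nor its stabilization, Alexander trivial. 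The identity \eqref{eq:matrix_id} you invoke goes in the wrong direction: it shows that the \emph{twisted} form, after one stabilization, contains the \emph{original} form as a submatrix---hence $\galg$ rises by at most one under twisting. It does not produce a larger Alexander trivial subgroup in the twisted form. Your appeal to ``normalizing'' the block by choosing a good band presentation cannot help either, since the entries $p,q,r$ are intrinsic to the Seifert form of $L$ and cannot be altered by isotopy.

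The paper's proof avoids this obstacle by reversing the logic. Rather than twisting $L$ and arguing algebraically that $\galg$ drops, it first \emph{defines} $L'$ as the boundary of the surface $F''$ obtained by deleting the handles $a$ and $b$ from $F$; the inequality $\galg(L')=\galg(L)-1$ then follows immediately from the surface inclusion and Lemma~\ref{lem:band_addition}. The substantive work is to show, after the fact, that $L$ is obtained from this particular $L'$ by a null-homologous $+1$-twist and a $-1$-twist. This is done via Kirby calculus: one introduces a $(0,+1)$-framed Hopf link, slides the band $a$ over the $+1$-component (making the $0$-framed curve a meridian of $a$), and then uses slides over that $0$-framed meridian to unknot and untwist $a$ completely---absorbing all the complexity of $p,q,r$ into the surgery diagram rather than into the Seifert form. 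After a final slide the two surgery curves become an unlinked $(+1,-1)$ pair, exhibiting the required twists. This handle-slide mechanism (adapted from Livingston) is precisely the missing ingredient; it is not a matrix computation and is not captured by \eqref{eq:matrix_id}.
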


\begin{figure}
\vspace{20pt}
  \begin{overpic}[width=0.9\textwidth]{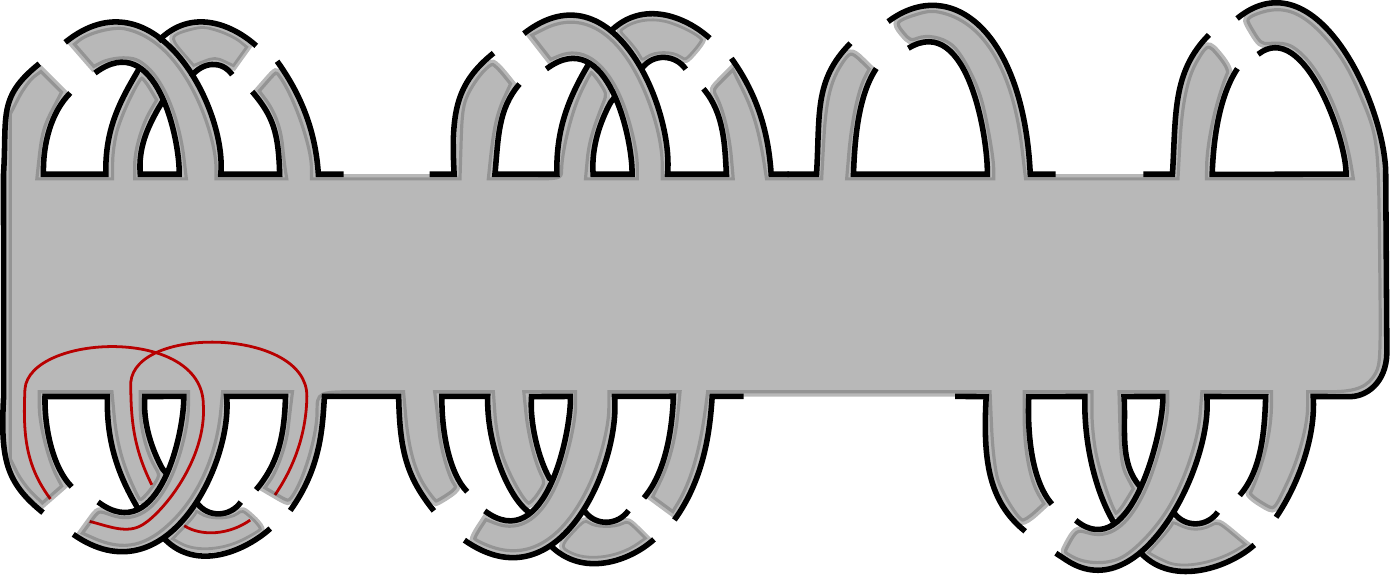}
    \put (0,42) {$\overbrace{\hspace{8cm} }^{\text{{\normalsize bands of $F'$}}}$}
    \put (25,30) {{\LARGE $\dots$}}
    \put (60,42) {$\overbrace{\hspace{5.5cm} }^{\text{{\normalsize $r-1$ bands}}}$}
    \put (77,30) {{\LARGE $\dots$}}
    \put (30,-0.5) {$\underbrace{\hspace{9.5cm} }_{\text{{\normalsize $2\galg-2$ bands}}}$}
    \put (58,11) {{\LARGE $\dots$}}
    \put (3,17) {$\alpha$}
    \put (20,17) {$\beta$}
    \put (1,1) {$a$}
    \put (20,1) {$b$}
  \end{overpic}
  \vspace{20pt}
  \caption{Arranging the handles of the surface $F$. The gaps in the bands indicate that they may be knotted, linked together and twisted.}
  \label{fig:unknotting_surface}
  \end{figure}

\begin{proof}
Suppose that $L$ has $r$ components. Consider a Seifert surface for $F$ for $L$ which realizes the algebraic genus. This contains a connected subsurface $F'$ with $\partial F'$ a knot with Alexander polynomial one and $\galg(L)=g(F)-g(F')$. We may view $F$ as obtained by attaching $2\galg + r-1$ handles to $F'$. So if we present $F'$ as a surface obtained by attaching $2g(F)$ bands to a disk, then we can present $F$ as being obtained by attaching $2\galg(L)+r-1$ further handles to this disk. Furthermore by performing handle slides, we can assume that the bands are grouped together into three groups: the $g(F')$ pairs of bands comprising $F'$, the $r-1$ bands increasing the number of components and the $\galg$ pairs of bands contributing to the algebraic genus of $L$. This is illustrating in Figure~\ref{fig:unknotting_surface}. Let $a$ and $b$ be a pair of handles contributing non-trivially to $\galg(L)$. Let $\alpha$ and $\beta$ be  curves running over the cores of these handles as shown in Figure~\ref{fig:unknotting_surface}. Let $F''$ be the surface obtained by deleting the handles $a$ and $b$ from $F$ and take $L'$ to be the boundary of $F''$. The existence of the surface $F''$ shows that $\galg(L')\leq \galg(L)-1$. However $L$ is obtained from $L'$ by a pair of oriented band moves, so Lemma~\ref{lem:band_addition} shows that $\galg(L')=\galg(L)-1$.  

The aim is to find a pair of null-homologous twists which will transform $L'$ into $L$. We will produce these twists by taking a surgery presentation for $L$ and manipulating it until we find a surgery presentation for $L$ which is a diagram for $L'$ with the addition two appropriately framed surgery curves.

By definition the framing of the curve $\alpha$ is given by $\theta(\alpha,\alpha)$, where $\theta$ is the Seifert form of $F$. We may assume that $\alpha$ has odd framing. If $\beta$ has odd framing, then we  can simply use $b$ in place of $a$. If both $\alpha$ and $\beta$ have even framing, then we can change our handle decomposition of $F$ by sliding $a$ over $b$. After such a slide the curve $\alpha'$ running over $a$ has the homology class of $\alpha+\beta$. This curve has odd framing, since
\begin{align*}
\theta(\alpha+\beta,\alpha+\beta)&= \theta(\alpha,\alpha)+\theta(\beta,\alpha)+\theta(\alpha,\beta)+\theta(\beta,\beta)\\
&\equiv \theta(\beta,\alpha)-\theta(\alpha,\beta) \equiv 1 \bmod 2,
\end{align*}
where we have used that the anti-symmetrization of the Seifert form is the intersection form of $H_1(F;\Z)$ in the second line.

\begin{figure}
  \begin{overpic}[width=0.9\textwidth]{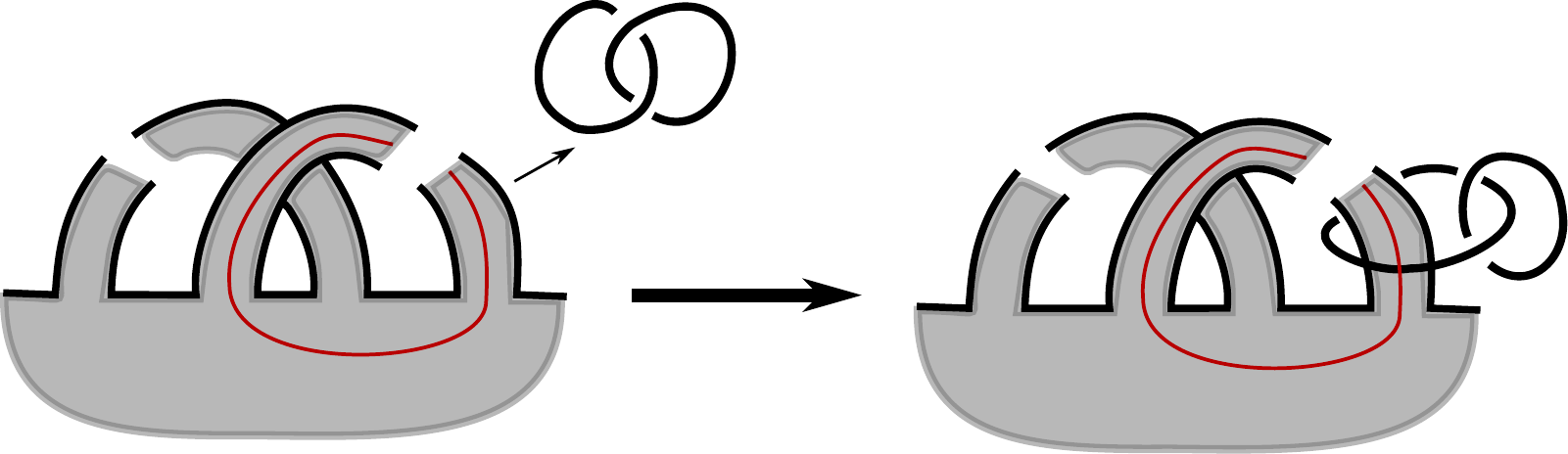}
    \put (10,3) {$2n-1$}
    \put (83,3) {$2n$}
    \put (20,24) {$a$}
    \put (83,25) {$a$}
    \put (42,16) {$0$}
    \put (34,16) {$+1$}
    \put (93,20) {$0$}
    \put (101,15) {$+1$}
  \end{overpic}
  \caption{Sliding the band $a$ over the $+1$-framed component.}
  \label{fig:hopf_over_a}
  \end{figure}

Now we produce our surgery diagram. Introduce a Hopf link to $S^3$ with one component $0$-framed and the other $+1$-framed. This provides a surgery presentation for $S^3$. Slide the band $a$ over the $+1$-framed curve. After this slide, the $0$-framed curve forms a meridian of $a$ and the framing of the core curve $\alpha$ becomes an even integer. Since the $0$-framed curve forms a meridian of $a$, we can slide other bands over it to effect ``crossing changes'' between $a$ and other bands in the handle decomposition of $F$ and also to pass the band $a$ through itself. Thus after some sequence of such moves we can assume that the curve $\alpha$ is unknotted and the band $a$ lies entirely above the band $b$ and is unlinked from all other bands. Moreover notice that sliding $a$ over the $0$-framed curve changes the framing of $\alpha$ by $\pm 2$ and that sliding any other band over $a$ does not change the framing of $\alpha$. Thus the framing on $\alpha$ is still an even integer and moreover by performing further slides we can assume that $\alpha$ has framing $0$. So by performing a sequence of isotopies and handle slides, we can obtain a surface $F'$ where $a$ appears as in Figure~\ref{fig:unknotted_band} but $F'$ is otherwise identical to $F$. Notice that the link $\partial F'$ is isotopic to $L'$, the link bounding the surface $F''$.

\begin{figure}
  \begin{overpic}[width=0.3\textwidth]{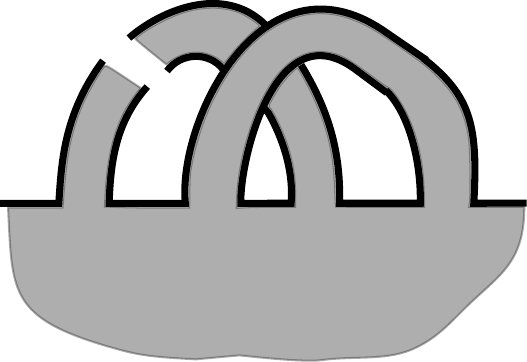}
    \put (95,40) {$a$}
    \put (5,40) {$b$}
  \end{overpic}
  \caption{The band $a$ after simplification.}
  \label{fig:unknotted_band}
  \end{figure}

Now slide the $0$-framed component of the Hopf link over the $+1$-framed component so that it becomes a two component unlink with a $+1$-framed component and a $-1$-framed component. Thus we have a surgery description showing that $L$ can be obtained from $L'$ by perfoming a null-homologous $+1$-twist and a null-homologous $-1$-twist as required.
\begin{figure}
  \begin{overpic}[width=0.9\textwidth]{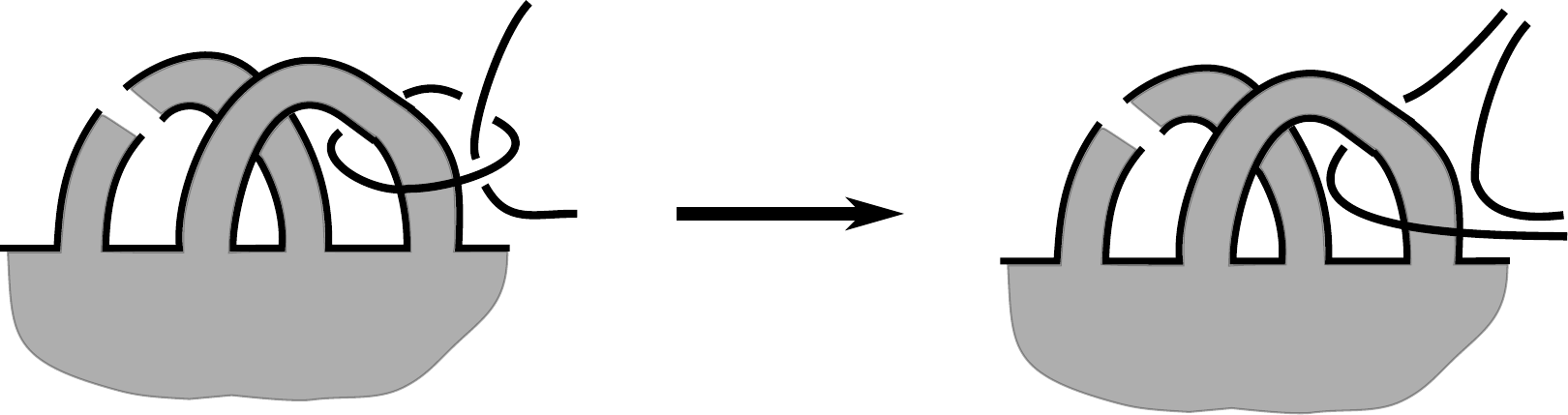}
    \put (20,24) {$a$}
    \put (83,23) {$a$}
    \put (27,22) {$0$}
    \put (35,23) {$+1$}
    \put (90,24) {$-1$}
    \put (97,15) {$+1$}
  \end{overpic}
  \caption{Sliding the $0$-framed component over the $+1$-framed component.}
  \label{fig:sliding_to_unlink}
  \end{figure}

\end{proof}
Thus we can prove Theorem~\ref{thm:twist_characterization}.
\twistcharacterization*

\begin{proof}
Theorem~\ref{thm:alg_twisting} shows that if $L$ is obtained from $L'$ with $\galg(L')=0$ by $p$ null-homologous $+1$-twists and $n$ null-homologous twists $-1$-twists, then $\galg(L)\leq \max\{n,p\}$. On the other hand, applying Proposition~\ref{prop:alg_twist_decrease} repeatedly shows that $L$ can be converted into a link $L'$ with $\galg(L')=0$, by $\galg(L)$ pairs of null-homologous $+1$-twists and $-1$-twists.
\end{proof}

\section{Satellite knots}\label{sec:satellite}
In this section we prove Theorem~\ref{thm:satellites}. First we note that null-homologous twisting is preserved under satellite operations.
\begin{lem}\label{lem:satellite_move}
Let $K$ and $K'$ be knots related by a null-homologous $n$-twist, then for any pattern $P\subseteq S^1\times D^2$, the satellite knots $P(K)$ and $P(K')$ are related by a null-homologous $n$-twist.
\end{lem}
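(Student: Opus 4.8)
The plan is to realise the given null-homologous $n$-twist by surgery on an unknot $C$ that lies outside a tubular neighbourhood of $K$, and then to observe that the very same curve $C$ performs a null-homologous $n$-twist converting $P(K)$ into $P(K')$. Concretely, let $C\subseteq S^3$ be an unknot with $C\cap K=\emptyset$ and $lk(C,K)=0$ such that $1/n$-surgery on $C$ carries $(S^3,K)$ to $(S^3,K')$.

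First I would fix a tubular neighbourhood $V$ of $K$, parametrised as $S^1\times D^2$ by the convention used to form satellites (so that the meridian bounds a disk in $V$ and the preferred longitude is the $0$-framed pushoff of $K$), and chosen thin enough that $V\cap C=\emptyset$; this is possible since $C$ is a fixed curve disjoint from $K$. Forming the satellite replaces $V$ by the pattern $P\subseteq S^1\times D^2$, leaving the ambient $S^3$ and the curve $C$ unchanged, with $P(K)\subseteq V$ and $C$ disjoint from $V$. Thus $C$ is still an unknot in $S^3$, and since $[P(K)]=w\,[K]$ in $H_1(V)$, where $w$ is the winding number of $P$, its image in $H_1(S^3\setminus C)\cong\Z$ is $w\cdot lk(C,K)=0$; that is, $lk(C,P(K))=0$. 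So $C$ does define a null-homologous $n$-twist on $P(K)$.

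The main step --- and the only point where I expect any subtlety --- is to check that $1/n$-surgery on $C$ converts $P(K)$ into $P(K')$ on the nose, rather than into a satellite of $K'$ with a different framing. Since a small surgery solid torus around $C$ is disjoint from $V$, the surgery is the identity on $V$ and on the pattern it contains; in the pattern-free picture it sends $(S^3,K)$ to $(S^3,K')$, so afterwards $V$ is a tubular neighbourhood of $K'$ with its meridian unchanged. It remains to see that the preferred longitude of $V$ is still the $0$-framed longitude of $K'$: the $0$-framed pushoff $K_0\subseteq V$ is disjoint from $C$, so by the standard surgery formula $lk_{\mathrm{new}}(K',K_0)=lk(K,K_0)+n\cdot lk(C,K)\cdot lk(C,K_0)=0$, the crucial cancellation being exactly the null-homology hypothesis $lk(C,K)=0$. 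Hence after surgery $V$ is the satellite-convention neighbourhood of $K'$ and the pattern inside it is precisely $P(K')$, so $P(K)$ and $P(K')$ are related by the null-homologous $n$-twist along $C$, as required.
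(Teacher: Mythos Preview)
Your proof is correct and follows essentially the same approach as the paper: both take the surgery curve $C$ in the companion complement and argue that the same $1/n$-surgery realises the twist on the satellite, with the null-homology hypothesis ensuring the framing is preserved. The paper phrases the framing check in terms of $C$ being null-homologous in $S^3\setminus\nu K$ (so that surgery carries the null-homologous longitude to the null-homologous longitude), whereas you verify it directly via the linking-number surgery formula; these are equivalent.
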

\begin{proof}
Let $X_P$ denote the complement $X_P=S^1\times D^2\setminus \nu P$ which comes with a meridian $\mu$ and distinguished longitude $\lambda$ in $\partial( S^1\times D^2)$. The knot complement $S^3\setminus \nu P(K)$ is obtained by gluing $X_P$ to $S^3\setminus \nu K$ so that $\mu$ and $\lambda$ are glued to the meridian and null-homologous longitude of $K$ respectively. The complement $S^3\setminus \nu P(K')$ is constructed similarly by gluing $X_P$ to $S^3\setminus \nu K'$.

Since $K$ and $K'$ are related by a null-homologous $n$-twist there is a null-homologous curve $C \subset S^3\setminus \nu K$ which such that performing $1/n$ surgery yields $S^3\setminus \nu K'$. Since $C$ is null-homologous in $S^3\setminus \nu K$, surgering $C$ takes the meridian and null-homologous longitude of $S^3\setminus \nu K$ to the meridian and null-homologous longitude of $S^3\setminus \nu K'$. Thus if we consider $C$ as a curve in $S^3\setminus \nu P(K)= S^3\setminus \nu K \cup X_P$, we see that $1/n$ surgery on $C$ will produce $S^3\setminus \nu P(K')$. Since $C$ is null-homologous in $S^3\setminus \nu K$ it is null-homologous in $S^3\setminus \nu P(K)$, thus $P(K)$ and $P(K')$ are related by a null-homologous $n$-twist.
\end{proof}

\begin{lem}\label{lem:alg_triv_satellite}
Let $K'$ be a knot with $\Delta_{K'}(t)=1$, then for any pattern $P$, we have $\galg(P(K'))= \galg(P(U))$.
\end{lem}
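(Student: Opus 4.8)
The plan is to show that $P(K')$ and $P(U)$ have $S$-equivalent Seifert forms; the lemma then follows because $\galg$, via the characterisation in Proposition~\ref{prop:3char}(2), is an invariant of the $S$-equivalence class of the Seifert form. (All Seifert surfaces of a knot induce $S$-equivalent Seifert matrices, and an elementary enlargement leaves the quantity $\tfrac{m-r+1}{2}-n$ unchanged and sends Alexander-trivial subgroups to Alexander-trivial subgroups — indeed it multiplies $\det(tM-M^T)$ by $t$ — so $S$-equivalent Seifert forms give the same value in Proposition~\ref{prop:3char}(2).)

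First I would build two compatible Seifert surfaces. Let $V=S^1\times D^2$ be the pattern solid torus, let $w$ be the winding number of $P$, and choose a connected surface $\widehat F\subseteq V$ with $\partial\widehat F=P\sqcup c$, where $c$ is a disjoint union of $|w|$ parallel copies of the product longitude of $V$ (so $c=\varnothing$ when $w=0$). With $V$ embedded in $S^3$ trivially, $c$ bounds $|w|$ disjoint meridian discs $D$ of the complementary solid torus, and $F_0:=\widehat F\cup D$ is a Seifert surface for $P(U)$. With $V$ embedded as $\nu K'$ — using, as always in the satellite construction, the $0$-framing — $c$ is $|w|$ parallel $0$-framed copies of $K'$, which bound a disjoint union $G$ of $|w|$ parallel push-offs of a Seifert surface of $K'$, and $F:=\widehat F\cup G$ is a Seifert surface for $P(K')$. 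A Mayer--Vietoris computation gives $H_1(F;\Z)=H_1(F_0;\Z)\oplus H_1(G;\Z)$, the first summand being carried by curves in $\widehat F$ that are null-homologous in $V$. Since the $0$-framing convention guarantees that the $S^3$-linking numbers of disjoint curves carried by $\widehat F$ agree for the two embeddings of $V$, while curves carried by $G$ lie in the exterior of $K'$ and have vanishing linking with anything carried by $\widehat F$ that bounds in $V$, one reads off that with respect to this splitting the Seifert form of $F$ is the block sum $\theta_{P(K')}=\theta_{P(U)}\oplus N$, where $\theta_{P(U)}$ is the Seifert form of $F_0$ and $N$ is the Seifert form of the push-off surface $G$ of the $|w|$-component cable link of $K'$. (This decomposition is also a consequence of Litherland's computation of the Seifert form of a satellite.)

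It then remains to see that $N$ is $S$-equivalent to the empty matrix. Attaching $|w|-1$ trivial bands (each contained in a small ball) to $G$ yields a connected surface with the same first homology and the same Seifert form $N$, whose boundary is a knot $J$; hence $\Delta_J(t)$ agrees up to units with $\det(tN-N^T)$, and a block-matrix reduction — or Litherland's formula applied to the $|w|$-cabling pattern — identifies $\det(tN-N^T)$, up to units, with $\Delta_{K'}(t^{|w|})$, which equals $1$ since $\Delta_{K'}\equiv 1$. Thus $J$ is a knot with trivial Alexander polynomial, so $N$ is $S$-equivalent to the empty matrix by the classical fact that such a knot has a Seifert matrix $S$-equivalent to the empty one. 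Therefore $\theta_{P(K')}=\theta_{P(U)}\oplus N$ is $S$-equivalent to $\theta_{P(U)}$, and Proposition~\ref{prop:3char} gives $\galg(P(K'))=\galg(P(U))$.

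I expect the main obstacle to be establishing the decomposition $\theta_{P(K')}\cong\theta_{P(U)}\oplus N$ rigorously: one must set up the two compatible surfaces $F_0$ and $F$, run the Mayer--Vietoris computation, and verify that every ``mixed'' linking term between the $\widehat F$-part and the $G$-part vanishes, using the $0$-framing convention and the null-homologous representatives of $H_1(F_0)$ inside $V$ (and treating $w=0$, where $F=\widehat F=F_0$, via the same linking-number invariance). If instead one simply quotes Litherland's satellite Seifert-form result, the only remaining points are the identity $\det(tN-N^T)=\pm t^{k}\Delta_{K'}(t^{|w|})$ and the $S$-triviality of Seifert forms of Alexander-polynomial-one knots, both of which are routine.
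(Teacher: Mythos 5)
Your proposal is correct and follows essentially the same route as the paper: both rest on the block-sum decomposition of the Seifert form of a satellite, $\theta_{P(K')}\cong\theta_{P(U)}\oplus N$ with $\det(tN-N^T)$ equal to $\Delta_{K'}(t^w)=1$ up to units, and both conclude $S$-equivalence and hence equal algebraic genus. The paper simply cites Lickorish's proof of Theorem~6.15 for the decomposition and treats the $S$-equivalence step as immediate, whereas you (re)derive the decomposition from the two compatible embeddings of the pattern surface and explicitly invoke the classical fact that an Alexander-polynomial-one Seifert matrix is $S$-equivalent to the empty one — the same argument, just unpacked.
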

\begin{proof}
We refer the reader to \cite[Proof of Theorem~6.15]{Lickorish1997introduction}. In this proof, Lickorish constructs a Seifert matrix for $P(K')$ of the form $\begin{pmatrix}
M &0\\
0 & X
\end{pmatrix}$
where $M$ is a Seifert matrix for $P(U)$ and $X$ is a matrix satisfying
$\det (tX-X^T) = \Delta_{K'}(t^w)$, where $w$ is the winding number of $P$. Since $\Delta_{K'}(t)=1$, this shows that $P(K')$ and $P(U)$ are $S$-equivalent, and hence have the same algebraic genus.
\end{proof}

\satellite*
\begin{proof}
By Proposition~\ref{prop:alg_twist_decrease}, $K$ can be converted into a knot $K'$ with Alexander polynomial one by a sequence of at most $\galg(K)$ pairs of null-homologus $+1$-twists and $-1$-twists. By Lemma~\ref{lem:satellite_move} this shows that $P(K)$ can be converted to $P(K')$ by a similar sequence of twists. Thus we have
\[
\galg(P(K))\leq \galg(K) + \galg(P(K')).
\]
By Lemma~\ref{lem:alg_triv_satellite} we have $\galg(P(K'))=\galg(P(U))$ so this is the desired bound.
\end{proof}

\section{Torus knots}\label{sec:torus}
We now gather the ingredients to prove Theorem~\ref{thm:top_genus}. 
\begin{lem}\label{lem:2power_untwisting}
For any $a,b\geq 1$, we have
\[\galg(T_{2^a, 2^b})< \frac{2^{a+b}}{3}.\]
\end{lem}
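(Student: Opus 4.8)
The plan is to induct on $a+b$, using null-homologous twists to relate $T_{2^a,2^b}$ to torus links with smaller parameters and then invoking Theorem~\ref{thm:alg_twisting}. The key geometric observation is that adding a full twist on a coherently oriented bunch of $k$ parallel strands is a null-homologous twist exactly when the strands cancel in homology, i.e. one can realize a null-homologous $\pm 1$-twist by encircling $2j$ strands of which $j$ run each way; but in a torus link all strands are coherently oriented, so instead one uses a curve $C$ encircling all $2^b$ strands of one cabling direction together with a parallel oppositely-oriented copy pushed off the surface — concretely, $T_{2^a,2^b}$ can be unwound onto $T_{2^{a-1},2^b}$ (a two-component split-free reduction) by a null-homologous twist on $2\cdot 2^{b-1}$ strands. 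I would set this up carefully so that $T_{2^a,2^b}$ and (two parallel copies of / a connected sum involving) $T_{2^{a-1},2^b}$ are related by a single null-homologous twist, and likewise in the other factor.

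Concretely: first observe $\galg(T_{2,2^b})$ and $\galg(T_{2^a,2})$ are small — these are $(2,2k)$-torus links, whose algebraic genus I would bound directly (e.g.\ $\galg(T_{2,2k}) \le$ something like $\lceil k/3\rceil$ or a similar explicit linear-in-$k$ bound obtained by iterated twisting down to an Alexander-trivial link), establishing the base case of the induction with room to spare against $2^{1+b}/3$. For the inductive step, I would show that $T_{2^a,2^b}$ is obtained from a disjoint or band-summed union of two copies of $T_{2^{a-1},2^b}$ by a null-homologous $m$-twist with $m = \pm 1$ (or more generally by a pair of twists with $-mn$ a square so Theorem~\ref{thm:alg_twisting} applies), hence
\[
\galg(T_{2^a,2^b}) \le \galg(T_{2^{a-1},2^b}\ \#\ T_{2^{a-1},2^b}) + 1 \le 2\,\galg(T_{2^{a-1},2^b}) + C
\]
for a small constant $C$ coming from the band moves (Lemma~\ref{lem:band_addition}) needed to pass between the split link and the connected sum, together with subadditivity of $\galg$ under connected sum. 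Iterating, $\galg(T_{2^a,2^b}) \lesssim 2^{a}\galg(T_{1,2^b}) + (\text{lower order})$, and since $\galg(T_{1,2^b})=0$ one instead terminates the recursion at the base case $T_{2,2^b}$, giving a geometric sum that I would arrange to evaluate to strictly less than $\frac{2^{a+b}}{3}$ — the factor $\tfrac13$ should emerge from a bound like $1 + 4 + 16 + \dots = \frac{4^{k}-1}{3}$ appearing when one halves each parameter in turn.

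The main obstacle, and where I expect to spend the most care, is the very first step: producing an honest null-homologous twist (a surgery on an unknotted curve $C$ with $\mathrm{lk}(C, T_{2^a,2^b}) = 0$) that converts $T_{2^a,2^b}$ into the desired smaller link. Since all strands of a torus link are coherently oriented, one cannot simply encircle some strands; the curve $C$ must be chosen so that it links algebraically zero with the link, which forces it to wrap around strands with a compensating structure (e.g.\ threading through the complementary handlebody of the genus-one Heegaard torus). Getting the framing arithmetic right — so that $1/m$-surgery on $C$ genuinely reduces $2^a \mapsto 2^{a-1}$ and so that the hypothesis $-mn$ a square of Theorem~\ref{thm:alg_twisting} is met when two such twists are combined — is the delicate part; everything after that is bookkeeping with Lemma~\ref{lem:band_addition} and the geometric series.
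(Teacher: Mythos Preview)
Your intuition that the $\tfrac13$ comes from a geometric series $1+4+\cdots+4^{k-1}=\tfrac{4^k-1}{3}$ is exactly right, but the rest of the plan has real gaps.

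The central gap is the one you flag yourself: you never produce the null-homologous curve. The paper resolves this directly and simply. Working with a single full twist on $2^{k+1}$ cooriented strands, split the strands into a left bunch and a right bunch of $2^k$ each, and take $C$ to be an unknotted curve that goes once around the left bunch and once (with the opposite sense) around the right bunch. Then $\mathrm{lk}(C,L)=2^k-2^k=0$, so $C$ is null-homologous even though every strand is oriented the same way; no ``threading through the complementary handlebody'' is needed. A single $-1$-twist on this $C$ converts the full twist on $2^{k+1}$ strands into two \emph{unlinked} bunches of $2^k$ strands, each carrying \emph{two} full twists (the extra twist on each bunch is exactly the self-twisting introduced by the surgery). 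This is the whole geometric input.

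Your proposed recursion is also off. You aim for $\galg(T_{2^a,2^b})\le 2\,\galg(T_{2^{a-1},2^b})+C$ with a bounded $C$, via one twist plus band moves. But the move above acts on a \emph{single} full twist, and $T_{2^a,2^b}$ (for $a\le b$) consists of $2^{b-a}$ full twists on $2^a$ strands; splitting it into two copies of $T_{2^{a-1},2^b}$ actually costs $2^{b-a}$ null-homologous twists, not one. The paper sidesteps this by not recursing on $\galg$ of torus links at all: it lets $T_k$ be the number of null-homologous twists needed to undo one full twist on $2^k$ strands, obtains $T_{k+1}\le 1+4T_k$ with $T_1=1$, hence $T_k\le\tfrac{4^k-1}{3}$, and then simply multiplies by $2^{b-a}$ and invokes Theorem~\ref{thm:alg_twisting}. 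No band moves, no connected sums, no subadditivity bookkeeping are needed.
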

\begin{proof}
By Theorem~\ref{thm:alg_twisting} it suffices to prove $T_{2^a, 2^b}$ can be converted to the unlink using at most $\frac{2^{a+b}}{3}$ null-homologous twists. Given a full twist on $2^{k+1}$ strands oriented so that all crossings are positive, we can perform a null-homologous $-1$-twist to produce two parallel sets of $2^k$ strands each with two positive full twists. This is depicted in Figure~\ref{fig:undo_full_twist}. 
Thus if we let $T_{k}$ denote the number null-homologous twisting moves required to undo a full twist on $2^k$ strands we see that $T_k$ satisfies the recursive bound $T_{k+1}\leq 1+ 4T_{k}$. Note that $T_1=1$ since a full twist on two strands can be undone by a single crossing change. Now the solution to the recursion relation $c_{k+1}=1+4c_{k}$ with $c_1=1$ is $c_k=\frac{4^k-1}{3}$. Thus we see that $T_{k}\leq \frac{4^k-1}{3}$ for all $k$.

Without loss of generality suppose that $2^a\leq 2^b$. The link $T_{2^a,2^b}$ can be viewed as $2^{b-a}$ full twists on $2^a$ strands.  Thus $T_{2^a, 2^b}$ can be converted into the unlink $T_{2^a,0}$ by removing $2^{b-a}$ positive full twists on $2^a$ strands. Thus
\[\galg(T_{2^a, 2^b})\leq 2^{b-a}\times\frac{4^a-1}{3}< \frac{2^{a+b}}{3},\]
as required.

\begin{figure}
  \begin{overpic}[width=0.9\textwidth]{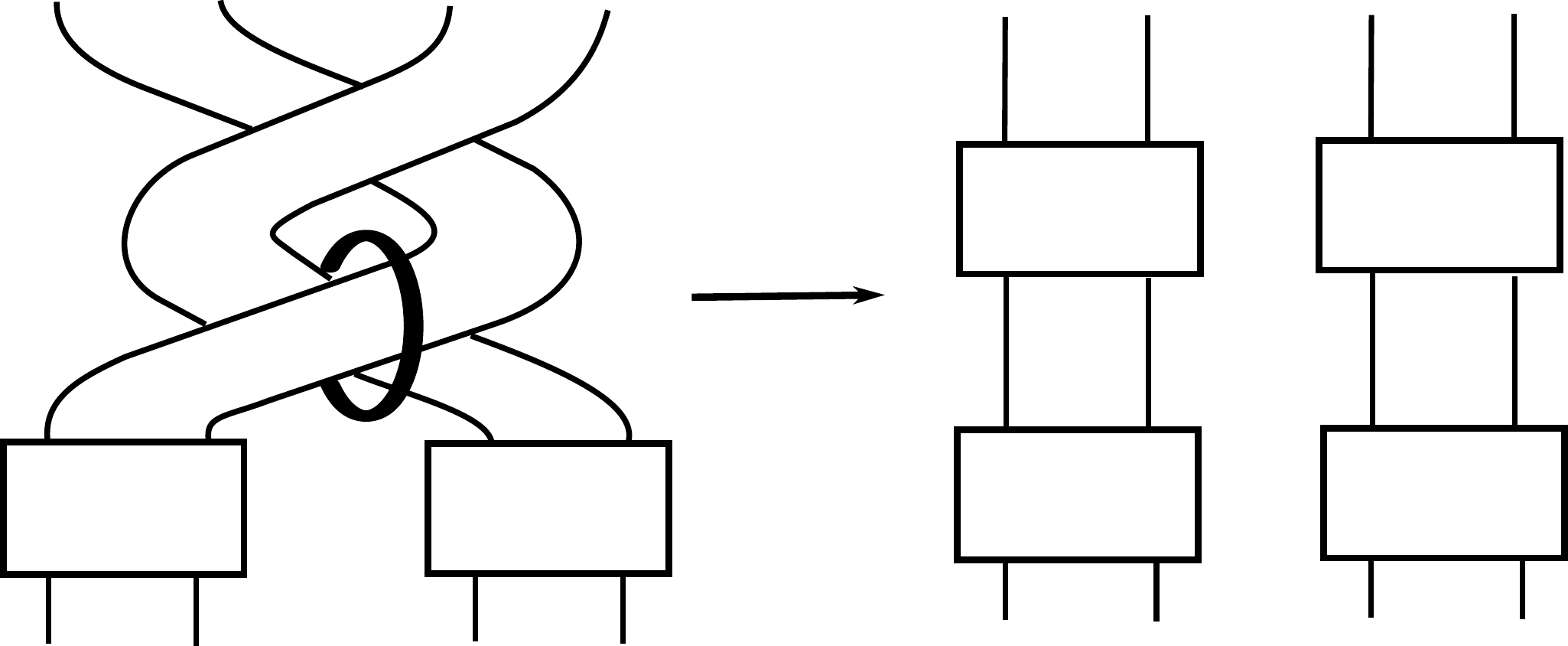}
    \put (20,10) {{\Large $-1$}}
    \put (1,0) {$\underbrace{\hspace{1.9cm} }_{\text{\normalsize $2^k$ strands}}$}
    \put (29,0) {$\underbrace{\hspace{1.9cm} }_{\text{\normalsize $2^k$ strands}}$}
    \put (62,0) {$\underbrace{\hspace{1.9cm} }_{\text{\normalsize $2^k$ strands}}$}
    \put (86,0) {$\underbrace{\hspace{1.9cm} }_{\text{\normalsize $2^k$ strands}}$}
    \put (5,7) {{\Large $+1$}}
    \put (33,7) {{\Large $+1$}}
    \put (66,8) {{\Large $+1$}}
    \put (90,8) {{\Large $+1$}}
    \put (66,26) {{\Large $+1$}}
    \put (90,26) {{\Large $+1$}}
  \end{overpic}
  \vspace{20pt}
  \caption{Converting a full twist on $2^{k+1}$ strands into four full twists on $2^k$ strands with a single null-homologous twist. Each box contains a full twist.}
  \label{fig:undo_full_twist}
  \end{figure}

\end{proof}

\begin{lem}\label{lem:surface_gluing}
For any $a,b,c\geq 1$,
\[\galg(T_{a,b+c})\leq \galg(T_{a,b})+\galg(T_{a,c})+a.
\]
\end{lem}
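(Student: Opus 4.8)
The plan is to exhibit $T_{a,b+c}$ as the result of $a$ oriented band moves on the split union $T_{a,b}\sqcup T_{a,c}$, and then to combine this with the subadditivity of $\galg$ under split unions and with $a$ applications of Lemma~\ref{lem:band_addition}. (This is in the same spirit as the proof of Lemma~\ref{lem:2power_untwisting}: one simplifies the torus link by band/twist moves that are cheap to account for.)

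First I would set things up as closed braids. Writing $\beta=\sigma_1\cdots\sigma_{a-1}\in B_a$, every torus link $T_{a,k}$ is the closure $\widehat{\beta^{k}}$, and $T_{a,b+c}=\widehat{\beta^{b}\beta^{c}}$. Draw $T_{a,b}=\widehat{\beta^{b}}$ and $T_{a,c}=\widehat{\beta^{c}}$ as closed braids in two disjoint boxes, stacked with the $\beta^{b}$-box directly above the $\beta^{c}$-box and with all closure arcs running down the right-hand side; routing each closure arc within its own half of the picture shows this diagram represents the split union $T_{a,b}\sqcup T_{a,c}$. Let $D^{b}_{i}$ and $D^{c}_{i}$ be the $i$-th closure arcs of $\widehat{\beta^{b}}$ and $\widehat{\beta^{c}}$, each running from the $i$-th bottom endpoint of its braid box to the $i$-th top endpoint. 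For $i=1,\dots,a$ perform the oriented band move on $D^{b}_{i}\cup D^{c}_{i}$ with band lying in the strip between the two boxes. Since one of these arcs points out of that strip and the other into it, the only orientation-compatible reconnection pairs the bottom of $\beta^{b}$ at position $i$ with the top of $\beta^{c}$ at position $i$ (via a short arc inside the strip) and the bottom of $\beta^{c}$ at position $i$ with the top of $\beta^{b}$ at position $i$ (via a long arc down the right); after all $a$ of these moves the diagram has become exactly $\widehat{\beta^{b}\beta^{c}}=T_{a,b+c}$. This realizes $T_{a,b+c}$ from $T_{a,b}\sqcup T_{a,c}$ by $a$ oriented band moves.

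Second I would record that $\galg(T_{a,b}\sqcup T_{a,c})\le\galg(T_{a,b})+\galg(T_{a,c})$. This follows from Proposition~\ref{prop:3char}(1): taking surfaces $F_b\supseteq F_b'$ and $F_c\supseteq F_c'$ realizing $\galg(T_{a,b})$ and $\galg(T_{a,c})$ (Lemma~\ref{lem:surface_stabilization}), joining $F_b$ to $F_c$ by a single band gives a Seifert surface for the split link $T_{a,b}\sqcup T_{a,c}$ of genus $g(F_b)+g(F_c)$; and since $F_b\setminus\Int F_b'$ and $F_c\setminus\Int F_c'$ are connected (each $F_\bullet'$ is connected with connected boundary) one can route a band inside this surface joining $\partial F_b'$ to $\partial F_c'$, producing a connected subsurface of genus $g(F_b')+g(F_c')$ whose boundary $\partial F_b'\#\partial F_c'$ has Alexander polynomial $1\cdot 1=1$. (Equivalently one may argue via Proposition~\ref{prop:3char}(3): $T_{a,b}\sqcup T_{a,c}$ arises from an Alexander polynomial one knot $K_b'\#K_c'$ by $n_b+n_c+1$ oriented band moves, where $n_b,n_c$ are the optimal band numbers of $T_{a,b},T_{a,c}$, and a short computation gives the same bound.)

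Finally, each band move in the first step changes the number of link components by one, so Lemma~\ref{lem:band_addition} bounds the change in $\galg$ by one per move and hence $|\galg(T_{a,b+c})-\galg(T_{a,b}\sqcup T_{a,c})|\le a$; combining with the second step yields the claim. The main obstacle is the first step: the recombination band moves are automatically orientation-compatible once the correct reconnection is selected, so the real content is verifying that the two new arcs can be routed so that the resulting diagram is the \emph{standard} closed braid $\widehat{\beta^{b+c}}$ of $T_{a,b+c}$ rather than some other link — this is where I would take the most care (the case $a=1$ being trivial, as $T_{1,k}$ is always an unknot).
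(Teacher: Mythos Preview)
Your proposal is correct and follows essentially the same approach as the paper: exhibit $a$ oriented band moves between $T_{a,b+c}$ and the split link $T_{a,b}\sqcup T_{a,c}$, then invoke Lemma~\ref{lem:band_addition} together with subadditivity of $\galg$ under split union. The only cosmetic difference is that the paper realizes the band moves using the $(b+c)$--strand braid presentation (deleting the $a$ occurrences of $\sigma_b$ from $(\sigma_1\cdots\sigma_{b+c-1})^a$), whereas you splice the closure arcs of two $a$--strand braids; the paper also leaves the split-union subadditivity implicit, while you spell it out.
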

\begin{proof}
Observe that $T_{a,b+c}$ can be converted into the split link $T_{a,b}\sqcup T_{a,c}$ by performing $a$ oriented crossing resolutions: if one considers $T_{a,b+c}$ as the closure of the braid word $(\sigma_1 \dotsb \sigma_{b+c-1})^a$, then these resolutions corresponding to deleting all instances of $\sigma_b$ from this braid word. Thus $T_{a,b+c}$ can be obtained from the split link $T_{a,b}\sqcup T_{a,c}$ by $a$ oriented band moves. Thus, Lemma~\ref{lem:alg_band_attach} implies that
\[
\galg(T_{a,b+c})\leq \galg(T_{a,b})+\galg(T_{a,c})+a,
\]
as required.
\end{proof}

We piece together the bounds from Lemma~\ref{lem:2power_untwisting} and Lemma~\ref{lem:surface_gluing} to obtain Theorem~\ref{thm:top_genus}.
\topgenus*
\begin{proof}
Suppose that $p$ and $q$ are represented in binary as $\sum_{i=0}^k 2^{a_i} =q$ and $p=\sum_{j=0}^l 2^{b_j}$, i.e. so that when represented in binary $q$ has $k+1$ non-zero digits and $p$ has $l+1$ non-zero digits when represented in binary. Notice that we have
\begin{equation}\label{eq:kl_bounds}
k\leq\log_2 p \quad\text{and}\quad l\leq \log_2 q.
\end{equation}
For any given any $i$ and $j$, Lemma~\ref{lem:2power_untwisting} shows that the algebraic genus of the link $L_{i,j}:=T_{2^{b_j}, 2^{a_i}}$ satisfies
\[\galg(L_{i,j})< \frac{2^{a_i+b_j}}{3}.
\]
By applying Lemma~\ref{lem:surface_gluing} to $L_{0,j},\dots, L_{k,j}$, we see that $T_{q,2^{b_j}}$ satisfies
\begin{align*}
\galg(T_{q,2^{b_j}})&< 2^{b_j}k + \sum_{i=0}^k \left(\frac{2^{a_i+b_j}}{3}\right)\\
&=2^{b^j}k +\frac{2^{b_j}q}{3}.
\end{align*}

So by applying Lemma~\ref{lem:surface_gluing} to $T_{q,2^{b_0}},\dots, T_{q,2^{b_l}}$, we see that $T_{p,q}$ satisfies
\begin{align*}
\galg(T_{p,q})&< lq + \sum_{j=0}^l \left(\frac{2^{b_j}q}{3}+2^{b_j}k\right)\\
&=\frac{pq}{3}+ ql + pk .
\end{align*}
By \eqref{eq:kl_bounds}, this shows that
\[
\galg(T_{p,q}) < \frac{pq}{3}+p\log_2 q +q\log_2 p,
\]
as required.
\end{proof}

\section{Anisotropic Seifert forms}\label{sec:anisotropic}
In this section we prove Proposition~\ref{prop:other_twisting} which shows that shows that most pairs of null-homologous twisting operations can change the algebraic genus and the topological slice genus by two. Recall that a quadratic form $q: \Z^n \rightarrow \Z$ is {\em isotropic} if there is $v\neq 0$ such that $q(v)=0$ and {\em anisotropic} otherwise.
\begin{lem}\label{lem:taylor_application}
Let $K$ be a knot with a Seifert surface $F$ and associated Seifert form $\theta$. If $\gtop(K)<g(F)$, then the quadratic form on $H_1(F;\Z)$ defined by $v\mapsto \theta(v,v)$ is isotropic.
\end{lem}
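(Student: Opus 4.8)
The plan is to derive this as a consequence of Taylor's lower bound for the topological slice genus in terms of the Seifert form (L.~Taylor, \emph{On the genera of knots}). Recall that for a Seifert form $\theta$ on $H_1(F;\Z)\cong\Z^{2g}$ one calls a subgroup $H\le H_1(F;\Z)$ \emph{isotropic for $\theta$} if $\theta$ vanishes identically on $H\times H$, and that if $r(F)$ denotes the maximal rank of such a subgroup then $g(F)-r(F)$ depends only on the $S$-equivalence class of $\theta$; this is precisely Taylor's invariant $t(K)=g(F)-r(F)$. Taylor's theorem is that $t(K)\le\gtop(K)$ (the argument applies to locally flat slicing surfaces). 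I would invoke this directly.

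Granting it, the argument is immediate. The hypothesis $\gtop(K)<g(F)$ gives
\[
g(F)-r(F)=t(K)\le\gtop(K)<g(F),
\]
so $r(F)\ge 1$; hence there is a primitive class $v\in H_1(F;\Z)$ spanning an isotropic subgroup of $\theta$, and in particular $\theta(v,v)=0$ with $v\neq 0$. This exhibits a nontrivial zero of the quadratic form $q(v)=\theta(v,v)$, so $q$ is isotropic in the sense required.

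The one point I would handle with care — and really the only substantive step, since the lemma is essentially Taylor's theorem reformulated — is to be sure that Taylor's conclusion produces an isotropic subgroup inside the \emph{given} Seifert surface $F$ rather than inside some other Seifert surface of $K$. This is exactly the $S$-equivalence invariance of $g(F)-r(F)$: a stabilization of a Seifert matrix raises both the genus and the maximal isotropic rank by exactly one, and congruences change neither, so $t(K)$ is computed correctly by any fixed $F$. (Alternatively one can quote Taylor's geometric construction, which from a fixed $F$ together with a locally flat slice surface $\Sigma$ of genus $\gtop(K)$ builds a $4$-manifold in which a half-lives-half-dies / Poincar\'e--Lefschetz duality argument extracts a subgroup of $H_1(F;\Z)$ of rank at least $g(F)-\gtop(K)$ on which $\theta$ vanishes.) It is worth noting that the cruder signature bound $\gtop(K)\ge\tfrac12|\sigma(K)|$ would \emph{not} suffice here, since an anisotropic integral (or rational) form need not be definite; Taylor's refinement is genuinely needed.
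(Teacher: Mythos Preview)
Your argument is correct and is essentially identical to the paper's own proof: both invoke Taylor's invariant $t(K)=g(F)-r(F)$ and the inequality $t(K)\le\gtop(K)$ to conclude $r(F)\ge 1$ when $\gtop(K)<g(F)$. The additional remarks you make about $S$-equivalence invariance and the inadequacy of the signature bound are accurate and relevant, though the paper simply cites the literature for these points.
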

\begin{proof}
Given a knot with a genus $g$ Seifert surface $F$ and Seifert form $\theta: \Z^{2g} \times \Z^{2g}\rightarrow \Z$, Taylor defines a knot invariant \cite{Taylor79genera}:
\[
t(K):= g-a(\theta),
\]
where $a(\theta)$ is the rank of a maximal isotropic subgroup of $\Z^{2g}$ (i.e. the maximal rank of a subgroup on which $\theta$ is identically 0). As discussed in \cite[Section~2]{Lewark2019calculating}, this invariant is known to be a lower bound for the topological slice genus. In particular, we have $a(\theta)\geq g(F)-\gtop(K)$. Thus if $\gtop(K)<g(F)$, then $\theta$ has a non-trivial isotropic subgroup, as required.
\end{proof}
In order to apply Lemma~\ref{lem:taylor_application} we will need to show certain forms are anisotropic. This requires some elementary number theory. For a prime $p$, we use $\Legendre{n}{p}$ to denote the Legendre symbol of $n$ modulo $p$. 

\begin{lem}\label{lem:anisotropic_condition}
Let $p$ be an odd prime and let $a,b,M,N$ be positive integers coprime to $p$. The quadratic form
\[
q(x_1, \dots , x_4)= ax_1^2- bx_2^2+ p(Mx_3^2+Nx_4^2)
\]
is anisotropic if $\Legendre{ab}{p}=-1$.
\end{lem}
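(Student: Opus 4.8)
The plan is to argue by contradiction: suppose there is a nonzero integer vector $(x_1,\dots,x_4)$ with $q(x_1,\dots,x_4)=0$, and derive a contradiction from $\Legendre{ab}{p}=-1$. First I would clear common factors: after dividing through by the largest power of $p$ dividing all of $x_1,\dots,x_4$, we may assume that $p$ does not divide all of them. Reducing the equation $ax_1^2 - bx_2^2 + p(Mx_3^2 + Nx_4^2)=0$ modulo $p$ gives $ax_1^2 \equiv bx_2^2 \pmod p$.

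The key case analysis is on whether $p \mid x_1$. If $p \nmid x_1$, then also $p\nmid x_2$ (since $a,b$ are units mod $p$), and we get $ab \equiv (ax_1/x_2)^2 \pmod p$ after multiplying $ax_1^2 \equiv b x_2^2$ by $ab$ and rearranging — more carefully, $a^2 x_1^2 \equiv ab x_2^2$, so $ab \equiv (a x_1 x_2^{-1})^2 \pmod p$ is a square, contradicting $\Legendre{ab}{p} = -1$. If instead $p \mid x_1$, then $bx_2^2 \equiv 0 \pmod p$ forces $p \mid x_2$. Writing $x_1 = p x_1'$, $x_2 = p x_2'$ and substituting back, the equation becomes $p^2(a x_1'^2 - b x_2'^2) + p(M x_3^2 + N x_4^2) = 0$, i.e. $p(a x_1'^2 - b x_2'^2) + M x_3^2 + N x_4^2 = 0$; reducing mod $p$ now gives $M x_3^2 + N x_4^2 \equiv 0 \pmod p$. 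Here the hypothesis $\Legendre{ab}{p} = -1$ does not directly say anything about $\Legendre{-MN}{p}$, so this sub-case needs a little more thought.

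I expect the main obstacle to be exactly this second sub-case: showing $p \mid x_3$ and $p \mid x_4$, which would then force $p \mid x_1, x_2, x_3, x_4$, contradicting our normalization. The resolution I anticipate is that the hypotheses as literally stated may be insufficient and the intended argument works over $\Q_p$ via a descent / Hensel-type argument using that the form $p(M x_3^2 + N x_4^2)$ together with $a x_1^2 - b x_2^2$ splits into two rank-two pieces of different $p$-adic "levels". Concretely: from $M x_3^2 + N x_4^2 \equiv 0 \pmod p$ one wants to conclude $p \mid x_3, x_4$; this holds precisely when $-MN$ is a non-square mod $p$. So either the lemma implicitly assumes $\Legendre{-MN}{p} = -1$ as well (which would be natural given the shape of the later application, where $M, N$ will be chosen to make both Legendre symbols $-1$), or the genuinely operative hypothesis is $\Legendre{ab}{p} = -1$ together with such a condition on $M,N$. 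I would first check the paper's subsequent usage to pin down which reading is intended, and then the proof is the two-step $p$-adic descent sketched above: reduce mod $p$, split into the "unit part" giving the $ab$-square obstruction and the "$p$-part" giving the $MN$-square obstruction, and in each case conclude $p$ divides all coordinates, contradicting minimality. The whole argument is short once the correct hypotheses are identified; the only real work is the bookkeeping of powers of $p$.
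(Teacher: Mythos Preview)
Your suspicion is correct, and in fact the lemma as stated is false. Take $p=5$, $a=M=N=1$, $b=2$: then $\Legendre{ab}{p}=\Legendre{2}{5}=-1$, all of $a,b,M,N$ are positive and coprime to $5$, yet
\[
q(5,5,1,2)=25-50+5(1+4)=0,
\]
so $q$ is isotropic. The missing hypothesis is precisely the one you isolated, namely $\Legendre{-MN}{p}=-1$.

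The paper's proof takes essentially the same descent approach you sketch, just packaged slightly differently: rather than your explicit case split, it argues that if $p\mid y_1,y_2$ then the $p$-adic valuation of the left side $ay_1^2-by_2^2$ is even while that of the right side $p(My_3^2+Ny_4^2)$ is odd, forcing both sides to vanish, whence positivity of $M,N$ gives $y_3=y_4=0$ and contradicts primitivity. The gap is exactly in the claim that $v_p\bigl(p(My_3^2+Ny_4^2)\bigr)$ is odd: this needs $v_p(My_3^2+Ny_4^2)$ to be even, which is equivalent to $\Legendre{-MN}{p}=-1$ by the same reasoning that handles the $a,b$ side. In the counterexample above, $v_5(1+4)=1$ is odd, so the parity argument collapses.

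Your instinct to look at the application is the right repair. In the paper's use of this lemma, the integers playing the role of $M$ and $N$ are of the form $a(4ac-1)/p$ and $|b|(4|b|d+1)/p$ with $c,d$ freely chosen subject only to $p\,\|\,(4ac-1)$ and $p\,\|\,(4|b|d+1)$; varying $c$ by multiples of $p$ lets $(4ac-1)/p$ range over all residues mod $p$, so one can additionally arrange $\Legendre{-MN}{p}=-1$. With that extra hypothesis added, your two-step descent (and the paper's parity argument) both go through.
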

\begin{proof}
We will show that if $q$ is isotropic, then $\Legendre{ab}{p}=1$. If $q$ is isotropic, then there are integers $y_1,\dots, y_4$ such that $\gcd(y_1,\dots, y_4)=1$ and
\begin{equation}\label{eq:iso_solution}
ay_2^2-by_1^2= p(My_3^2+Ny_4^2).
\end{equation}
Since $p$ divides the right hand side, we see that $y_1$ and $y_2$ provide a solution to the equation $aX^2\equiv bY^2\bmod p$. Moreover, this is a non-trivial solution, that is $y_1,y_2\not\equiv 0 \bmod p$. Assume for sake of contradiction that $y_1\equiv y_2\equiv 0\bmod p$. If both sides of \eqref{eq:iso_solution} are non-zero, then the largest power of $p$ dividing the left hand side is even, but the largest power of $p$ dividing the right hand side is odd. Thus both sides of \eqref{eq:iso_solution} must be zero. This implies that $y_3=y_4=0$, which would imply that $\gcd(y_1,\dots, y_4)\geq p>1$. Thus we must have $y_1,y_2\not\equiv 0 \bmod p$.

Since the quadratic residues form an index two subgroup in $(\Z/p\Z)^\times$, the equation $aX^2\equiv bY^2 \bmod p$ has a non-trivial solution if and only if both $a$ and $b$ are quadratic residues or  both $a$ and $b$ are quadratic non-residues modulo $p$. In either case, this implies that if $aX^2\equiv bY^2 \bmod p$ has a non-trivial solution, then $\Legendre{ab}{p}=1$, as required.
\end{proof}

\begin{lem}\label{lem:non_trivial_legendre}
For any integer $n>0$ which is not a square, there is an odd prime $p$ such that
\[
\Legendre{n}{p}=-1.
\]
\end{lem}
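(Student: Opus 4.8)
The plan is to reduce the statement to a standard fact about quadratic reciprocity. Write $n = m^2 d$ where $d > 1$ is squarefree (this is possible precisely because $n$ is not a square, so $d \neq 1$); since $\Legendre{n}{p} = \Legendre{m^2 d}{p} = \Legendre{d}{p}$ for any odd prime $p \nmid m$, it suffices to find an odd prime $p$ with $\Legendre{d}{p} = -1$, subject only to $p \nmid 2m$. So I may assume $n = d$ is squarefree and $>1$.

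First I would handle the contribution of the prime $2$ to $d$. Factor $d = 2^\varepsilon q_1 \cdots q_k$ with $\varepsilon \in \{0,1\}$ and $q_1, \dots, q_k$ distinct odd primes, and note $k \geq 1$ unless $d = 2$. If $d = 2$, one takes $p = 3$ (or $p = 5$), since $2$ is a non-residue mod $3$. In general, the key step is to prescribe the value of each factor $\Legendre{q_i}{p}$ and $\Legendre{2}{p}$ by fixing the residue class of $p$ modulo a suitable modulus, then invoke Dirichlet's theorem on primes in arithmetic progressions to realize that class by an actual prime. Concretely: by quadratic reciprocity $\Legendre{q_i}{p}$ depends only on $p \bmod q_i$ (up to a sign depending on $p \bmod 4$), and $\Legendre{2}{p}$ depends only on $p \bmod 8$; by the Chinese Remainder Theorem one can choose a residue class $a$ modulo $8 q_1 \cdots q_k$, coprime to the modulus, so that the product $\Legendre{d}{p} = \Legendre{2}{p}^\varepsilon \prod_i \Legendre{q_i}{p}$ equals $-1$ for every prime $p \equiv a$. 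The one subtlety is making sure the individual constraints are consistent and that one genuinely has freedom to make the product negative: pick $p \equiv 1 \bmod 4$ to kill the reciprocity sign flips, then one still needs $\prod_i \Legendre{q_i}{p} \cdot \Legendre{2}{p}^\varepsilon = -1$; since for each odd $q_i$ both residues and non-residues mod $q_i$ exist (and likewise $\Legendre{2}{p} = \pm 1$ are both attained as $p$ ranges over classes mod $8$), one has enough independent choices to force exactly one factor (say the last $q_k$, or the factor $\Legendre{2}{p}$ if $\varepsilon = 1$ and $k = 0$) to be $-1$ and all others $+1$. Finally, Dirichlet's theorem gives an odd prime $p$ in this class, and it may be taken coprime to $m$ since only finitely many primes divide $m$. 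This $p$ satisfies $\Legendre{n}{p} = -1$.

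The main obstacle — really the only non-formal point — is the consistency of the congruence conditions and the verification that the sign of the product can be made negative; once one commits to $p \equiv 1 \bmod 4$ the reciprocity signs trivialize and the remaining factors $\Legendre{q_i}{p}$, $\Legendre{2}{p}$ become independently prescribable by CRT, so the product takes both values. (An alternative to invoking Dirichlet's theorem: one can argue more elementarily that the density of primes with $\Legendre{d}{p} = -1$ is positive by a counting argument à la Euclid, but citing Dirichlet is cleaner and the paper is not trying to be self-contained in elementary number theory.)
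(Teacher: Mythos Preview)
Your proposal is correct and follows essentially the same approach as the paper: both arguments use the Chinese Remainder Theorem together with Dirichlet's theorem on primes in arithmetic progressions, fixing $p\equiv 1\bmod 4$ so that quadratic reciprocity gives $\Legendre{q_i}{p}=\Legendre{p}{q_i}$ for each odd prime $q_i\mid n$, and then forcing exactly one factor in the product $\Legendre{n}{p}=\prod_i\Legendre{p_i}{p}^{a_i}$ to equal $-1$ (handling the prime $2$ via the residue of $p$ mod $8$). The only cosmetic difference is that you first strip off the square part to reduce to squarefree $d$, whereas the paper works directly with the full factorization and singles out a prime appearing to an odd power; the underlying mechanism is identical.
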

\begin{proof}
This is a standard application of quadratic reciprocity and Dirichlet's theorem on primes in arithmetic progressions.
Suppose that $n$ has prime factorization $n=p_1^{a_1} \dots p_k^{a_k}$. Since $n$ is not a square, at least one of the $a_i$ is odd. Without loss of generality assume that $a_1$ is odd. Suppose first that $p_1$ is an odd prime. By the Chinese remainder theorem and Dirichlet's theorem on primes in arithmetic progressions, we can choose a prime $p$ satisfying the congruences $p\equiv 1 \bmod 4$, $p\equiv 1\bmod p_i$ for $i>1$ and $p\equiv q\bmod p_1$, where $q$ satisfies $\Legendre{q}{p_1}=-1$. It follows from quadratic reciprocity that such a $p$ satisfies $\Legendre{n}{p}=-1$.

If $p_1=2$, then we choose $p$ to be a prime satisfying $p\equiv 5 \bmod 8$ and $p\equiv 1\bmod p_i$ for all $i>1$. Using quadratic reciprocity and the fact that $\Legendre{2}{p}=-1$ for $p\equiv 5 \bmod 8$, we see that such a $p$ satisfies $\Legendre{n}{p}=-1$.
\end{proof}

\othertwisting*
\begin{proof}
\begin{figure}
\vspace{10pt}
  \begin{overpic}[width=0.6\textwidth]{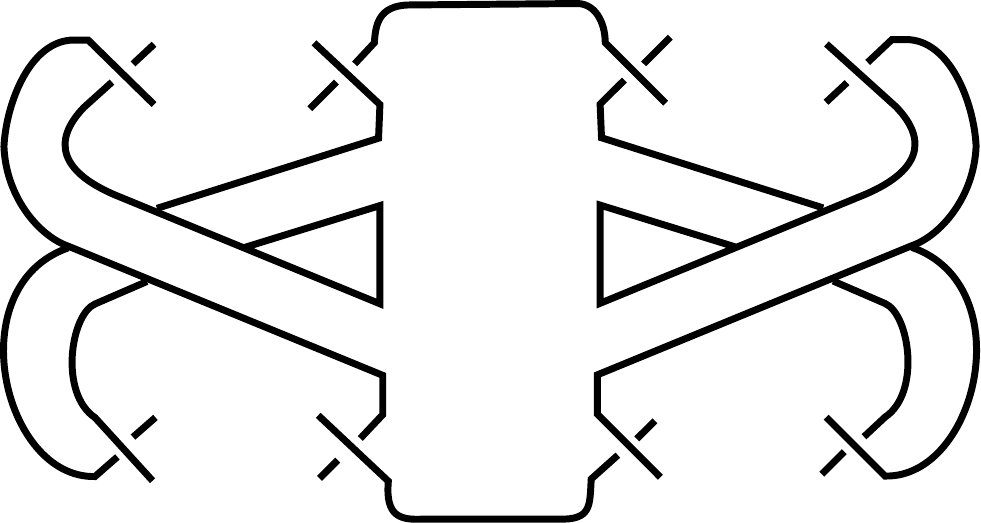}
    \put (10,50) {$\overbrace{\hspace{2.5cm} }^{\text{{\normalsize $a$ twists}}}$}
    \put (63,50) {$\overbrace{\hspace{2.5cm} }^{\text{{\normalsize $b$ twists}}}$}
    \put (10,2) {$\underbrace{\hspace{2.5cm} }_{\text{{\normalsize $c$ twists}}}$}
    \put (63,2) {$\underbrace{\hspace{2.5cm} }_{\text{{\normalsize $d$ twists}}}$}
    \put (20,45) {{\LARGE $\dots$}}
    \put (72,7) {{\LARGE $\dots$}}
    \put (20,7) {{\LARGE $\dots$}}
    \put (72,45) {{\LARGE $\dots$}}
  \end{overpic}
  \vspace{10pt}
  \caption{The knot $K(a,b,c,d)$.}
  \label{fig:k_abcd}
  \end{figure}
  
  \begin{figure}
  \begin{overpic}[width=0.6\textwidth]{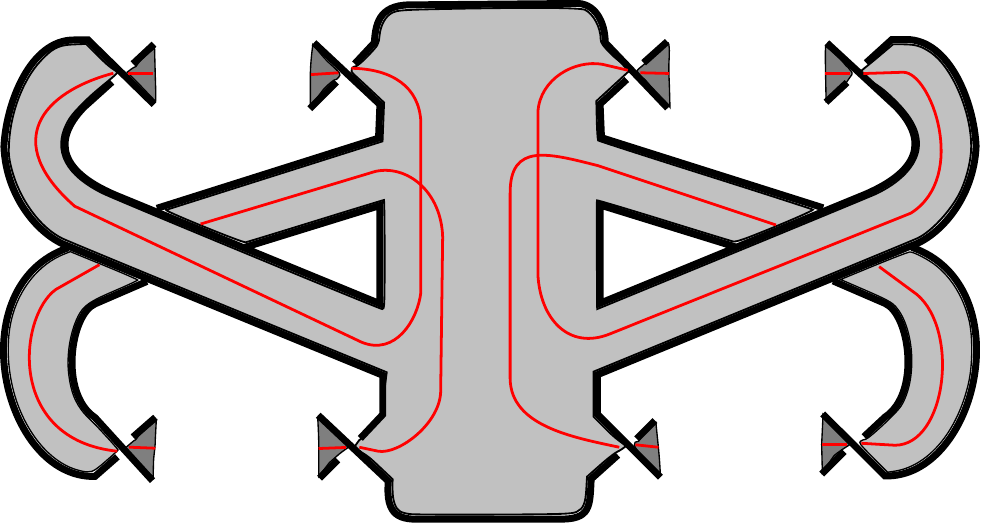}
    \put (40,47) {$\alpha_1$}
    \put (55,48) {$\alpha_2$}
    \put (42,5) {$\alpha_3$}
    \put (54,6) {$\alpha_4$}
    \put (20,45) {{\LARGE $\dots$}}
    \put (72,7) {{\LARGE $\dots$}}
    \put (20,7) {{\LARGE $\dots$}}
    \put (72,45) {{\LARGE $\dots$}}
  \end{overpic}
  \caption{A Seifert surface for $K(a,b,c,d)$.}
  \label{fig:k_abcd_surface}
  \end{figure}
For integers $a,b,c,d$ let $K=K(a,b,c,d)$ be the knot as shown in Figure~\ref{fig:k_abcd}. With respect to the Seifert surface and basis shown in Figure~\ref{fig:k_abcd_surface}, this has Seifert matrix 
\begin{equation}\label{eq:Kabcd_matrix}
M=\begin{pmatrix}
a & 0 & 1& 0\\
0 & b & 0& 1\\
0 & 0 & c& 0\\
0 & 0 & 0& d\\
\end{pmatrix}.
\end{equation}

Notice that for any $c$ and $d$ the knot $K(0,0,c,d)$ is the unknot. Thus we see that $K=K(a,b,c,d)$ can be unknotted by performing a null-homologous $a$-twist and a null-homologous $b$-twist. The aim is to show that for any $a,b$ such that $-ab$ is not a square, then we can find $c$ and $d$ such that $\gtop(K(a,b,c,d))=2$.
Without loss of generality, assume that $a>0$. First suppose that we also have $b>0$. In this case, one can easily see from \eqref{eq:Kabcd_matrix} that $\sigma(K(a,b,c,d))=4$ for any $c>0$ and $d>0$.

Thus it suffices to consider the case where $a>0$ and $b<0$. By Lemma~\ref{lem:taylor_application} it suffices to find $c,d$ ensuring that the Seifert form is anisotropic. That is we need to show that the quadratic form \eqref{eq:Kabcd_matrix}
\begin{equation}
q(x_1, \dots, x_4)= ax_1^2+ x_1x_3 + cx_3^2 -|b|x_2^2+ x_2x_4 + cx_4^2
\end{equation}
is not always isotropic. This can be diagonalized over $\Q$ as
\begin{equation}
q(x_1, \dots, x_4)=a\left(x_1 + \frac{x_3}{2a}\right)^2+a(4ac-1)\left(\frac{x_3}{2a}\right)^2 -|b|\left(x_2 - \frac{x_4}{2|b|}\right)^2+|b|(4|b|d+1)\left(\frac{x_4}{2|b|}\right)^2.
\end{equation}
Since a quadratic form is isotropic over $\Z$ if and only if it is isotropic over $\Q$. It suffices to show that the form
\[
\widetilde{q}(x_1, \dots, x_4)=ax_1^2-|b|x_2^2+a(4ac-1)x_3^2 +|b|(4|b|d+1)x_4^2
\]
is anisotropic for some choice of $c$ and $d$. Since we are assuming that $-ab$ is not a square, then Lemma~\ref{lem:non_trivial_legendre} shows there is an odd prime $p$ with $\left(\frac{a|b|}{p}\right)=-1$. Since $p$ is coprime to $a$ and $b$, we can find $c$ and $d$ such that $p$ divides both $(4ac-1)$ and $(4|b|d+1)$ but $p^2$ does not divide either $(4ac-1)$ or $(4|b|d+1)$. Lemma~\ref{lem:anisotropic_condition} shows that for such $c$ and $d$ the Seifert form is anisotropic and hence $\gtop(K(a,b,c,d))=2$, as required.
\end{proof}

\bibliography{alg_genus_bib}

\begin{thebibliography}{FMPC19}

\bibitem[BF14]{Borodzik14Algebraic}
Maciej Borodzik and Stefan Friedl.
\newblock On the algebraic unknotting number.
\newblock {\em Trans. London Math. Soc.}, 1(1):57--84, 2014.

\bibitem[BF15]{Borodzik15UnknottingI}
Maciej Borodzik and Stefan Friedl.
\newblock The unknotting number and classical invariants, {I}.
\newblock {\em Algebr. Geom. Topol.}, 15(1):85--135, 2015.

\bibitem[BFLL18]{Baader18topological}
S.~Baader, P.~Feller, L.~Lewark, and L.~Liechti.
\newblock On the topological 4-genus of torus knots.
\newblock {\em Trans. Amer. Math. Soc.}, 370(4):2639--2656, 2018.

\bibitem[BL17]{Baader17stable_alternating}
Sebastian Baader and Lukas Lewark.
\newblock The stable 4-genus of alternating knots.
\newblock {\em Asian J. Math.}, 21(6):1183--1190, 2017.

\bibitem[Fel16]{Feller16degree}
Peter Feller.
\newblock The degree of the {A}lexander polynomial is an upper bound for the
  topological slice genus.
\newblock {\em Geom. Topol.}, 20(3):1763--1771, 2016.

\bibitem[FL18]{Feller18Classical}
Peter Feller and Lukas Lewark.
\newblock On classical upper bounds for slice genera.
\newblock {\em Selecta Math. (N.S.)}, 24(5):4885--4916, 2018.

\bibitem[FL19]{Feller19balanced}
Peter Feller and Lukas Lewark.
\newblock Balanced algebraic unknotting, linking forms, and surfaces in three-
  and four-space.
\newblock {\em arXiv:1905.08305}, 2019.

\bibitem[FM16]{Feller16twobridge}
Peter Feller and Duncan McCoy.
\newblock On 2-bridge knots with differing smooth and topological slice genera.
\newblock {\em Proc. Amer. Math. Soc.}, 144(12):5435--5442, 2016.

\bibitem[FMPC19]{Feller19satellite}
P.~Feller, A.~N. Miller, and J.~Pinzon-Caicedo.
\newblock A note on the topological slice genus of satellite knots.
\newblock {\em In preparation}, 2019.

\bibitem[Fre82]{Freedman82topology}
Michael~Hartley Freedman.
\newblock The topology of four-dimensional manifolds.
\newblock {\em J. Differential Geom.}, 17(3):357--453, 1982.

\bibitem[Lic97]{Lickorish1997introduction}
W.B.~Raymond Lickorish.
\newblock {\em An Introduction to Knot Theory}.
\newblock Springer, 1997.

\bibitem[Lie16]{Liechti16positive}
Livio Liechti.
\newblock Positive braid knots of maximal topological 4-genus.
\newblock {\em Math. Proc. Cambridge Philos. Soc.}, 161(3):559--568, 2016.

\bibitem[Liv19]{Livingston19twist}
Charles Livingston.
\newblock Null-homologous unknottings.
\newblock {\em arXiv:1902.05405}, 2019.

\bibitem[LM19]{Lewark2019calculating}
Lukas Lewark and Duncan McCoy.
\newblock On calculating the slice genera of 11- and 12-crossing knots.
\newblock {\em Exp. Math.}, 28(1):81--94, 2019.

\bibitem[Rud84]{Rudolph84Projective}
Lee Rudolph.
\newblock Some topologically locally-flat surfaces in the complex projective
  plane.
\newblock {\em Comment. Math. Helv.}, 59(4):592--599, 1984.

\bibitem[Tay79]{Taylor79genera}
L.~R. Taylor.
\newblock On the genera of knots.
\newblock In {\em Topology of low-dimensional manifolds ({P}roc. {S}econd
  {S}ussex {C}onf., {C}helwood {G}ate, 1977)}, volume 722 of {\em Lecture Notes
  in Math.}, pages 144--154. Springer, Berlin, 1979.

\end{thebibliography}
\bibliographystyle{alpha}

\end{document}